 \DeclareMathSymbol{\ell}{\mathlord}{letters}{'140}
\DeclareSymbolFont {mysymbols}{OMS}{cmsy}{m}{n} 
\DeclareMathSymbol{\calI}{\mathalpha}{mysymbols}{`I}
\DeclareSymbolFont {mylargesymbols}{OMX}{cmex}{m}{n}
\DeclareMathOperator{\lm}{lm}
\DeclareMathOperator{\lt}{lt}
\DeclareMathOperator{\lc}{lc}
\DeclareMathOperator{\lcm}{lcm}
\DeclareMathOperator{\idx}{index}
\DeclareMathOperator{\syz}{Syz}
\DeclareMathOperator{\poly}{poly}
\DeclareMathOperator{\ggv}{G2V}
\DeclareMathOperator{\ggvc}{iG2V}
\DeclareMathOperator{\ff}{F5}
\DeclareMathOperator{\ffc}{F5C}
\DeclareMathOperator{\ffcc}{iF5C}
\DeclareMathOperator{\ffe}{F5A}
\DeclareMathOperator{\ffec}{iF5A}
\DeclareMathOperator{\siggb}{\textsc{SigGB}}
\DeclareMathOperator{\incsig}{\textsc{IncSig}}
\DeclareMathOperator{\incsigc}{\textsc{IncSigR}}
\DeclareMathOperator{\redsb}{\textsc{RedSB}}
\DeclareMathOperator{\reduce}{\textsc{Reduce}}
\newcommand*{\openRight}{\csname .openrighttrue\endcsname}
\newcommand\rp{\ensuremath{\mathcal{R}}}
\newcommand\psyz{\ensuremath{\textrm{PSyz}}}
\newcommand\sigvar{e}
\newcommand\siglt{\ensuremath{\prec}}
\newcommand\sigle{\ensuremath{\preceq}}
\newcommand\monset{\ensuremath{\mathcal{M}}}
\newcommand\sigset{\ensuremath{\mathcal{S}}}
\newcommand\spol[2]{\ensuremath{\mathrm{spoly}(#1,#2)}}
\newcommand\sig{\ensuremath{\mathrm{sig}}}
\newcommand\field{\ensuremath{\mathcal{K}}}
\newcommand\pring{\ensuremath{\mathcal{K}[x_1,\ldots,x_n]}}
\definecolor{commentcolor}{rgb}{0.6, 0.6, 0.6}
\definecolor{newcolor}{RGB}{218,22,20}
\definecolor{1}{RGB}{16,78,139}
\definecolor{2}{RGB}{0,114,49}
\definecolor{3}{RGB}{96,139,0}
\definecolor{4}{RGB}{255,100,0}
\definecolor{5}{RGB}{255,49,0}
\definecolor{6}{RGB}{125,0,0}
\numberwithin{equation}{section} 
\numberwithin{figure}{section} 
\theoremstyle{plain}
\newtheorem{thm}{Theorem}[section]
\newtheorem{prop}[thm]{Proposition}
\newtheorem{cor}[thm]{Corollary}
\newtheorem{lem}[thm]{Lemma}
\theoremstyle{definition}
\newtheorem{defi}[thm]{Definition}
\newtheorem{example}[thm]{Example}
\theoremstyle{remark}
\newtheorem{rem}[thm]{Remark}
\definecolor{commentcolor}{rgb}{0.6, 0.6, 0.6}
\begin{document}

\title{Improving incremental signature-based Gr\"obner basis
  algorithms}

\author{%
Christian Eder\\
c/o Department of Mathematics \\
TU Kaiserslautern \\
67653 Kaiserslautern, Germany \\
ederc@mathematik.uni-kl.de
}
\maketitle

\begin{abstract}
In this paper we describe a combination of ideas to improve incremental signature-based
Gr\"obner basis algorithms having a big impact on their performance.
Besides explaining how to combine already known optimizations to achieve more
efficient algorithms, we show how to improve them even more. Although our idea 
has a psotive affect on all kinds of incremental signature-based algorithms, the 
way this impact is achieved can be quite different. Based on the two best-known 
algorithms in this area, $\ff$ and $\ggv$, we explain our idea, both from a 
theoretical and a
practical point of view.
\end{abstract}



\section{Introduction}

Computing Gr\"obner bases is a fundamental tool in commutative and computer
algebra. Buchberger introduced the first algorithm to compute such bases in
1965, see~\cite{bGroebner1965}. In the meantime lots of additional and
improved algorithms have been developed.

In the last couple of years, so-called
{\it signature-based} algorithms like $\ff$, see~\cite{fF52002Corrected}, and
$\ggv$, see~\cite{ggvGGV2010}, have become more popular. 
Lots of optimizations for these algorithms have been published, for
example, see~\cite{epF5C2009,apF5CritRevised2011,gvwGVW2010,sw2011}.
Whereas the last two publications focus on the field of non-incremental signature-based algorithms,
we focus our discussion in this paper to the initial presentation of this
kind of algorithms, based on Faug\`ere's initial presentation of $\ff$
in~\cite{fF52002Corrected}: Computing Gr\"obner bases step by step iterating
over the generators of the input system. The intermediate states of this 
incremental structure can be used to improve performance.

The intention of this paper is not only to cover, to collect, and to compare the various
optimizations found recently, but also to increase the algorithms'
efficiency.
As discussed in-depth in~\cite{epSig2011}, signature-based algorithms
differ mainly on their implementation of two criteria used to detect useless
critical pairs during the computations, the {\it non-minimal
signature} criterion and the {\it rewriting signature} criterion; the
optimizations presented in this publication have mostly an impact on the first 
one criterion. We focus our discussion on the two best-known and most
efficient incremental algorithms in this area, namely $\ff$ and $\ggv$. Due to 
their different, in some sense even opposed, usages of the above mentioned 
criteria, their behaviour under our optimizations gives a rather accurate 
picture of the general impact on the class of incremental signature-based 
algorithms.

In Section~\ref{sec:Background} we introduce the basic notions of
incremental signature-based algorithms.
In~\cite{epF5C2009} the idea of interreducing intermediate Gr\"obner bases
between the iteration steps of $\ff$ is illustrated: Speed-ups of up 
to $30\%$ comparing $\ffc$ to the basic $\ff$ can be achieved by minimizing the computational overhead
generated due to the inner workings of signature-based algorithms.
Section~\ref{sec:ffc} shortly reviews this idea, from
a more general point of view than it was done in its initial
presentation back then, taking its effects on algorithms like $\ggv$ into account.
$\ggv$ and the idea of using zero reductions actively in the current iteration
step is content of Section~\ref{sec:ggv}. The idea of using recent zero 
reductions in the algorithm goes back to Alberto Arri's preprint of 
\cite{apF5CritRevised2011} in 2009, where such a kind of optimization was 
mentioned for the first time.  Combining these two, at a first look quite 
separated improvements in a clever
way is the main contribution of this
paper: We see that a quite easy idea can be used to get a faster detection of
useless critical pairs; in the situation of $\ggv$ one even discards more
elements, which leads to a huge improvement in the overall performance of the
algorithm.

\section{Basic setting}
\label{sec:Background}
We start with some basic notations.
Let $i\in\mathbb{N}$, $\field$ a field, and
$\rp=\pring$. Let $F_{i}=\left(f_{1},\ldots,f_{i}\right)$,
where each $f_{j}\in \rp$, and $I_{i}=\langle F_{i}\rangle \subset \rp$ is 
the ideal generated by the elements of $F_{i}$. Moreover, we fix a {\it degree-compatible
ordering $<$} on the monoid $\monset$ of monomials of $x_{1},\ldots,x_{n}$.
For a polynomial $p\in \rp$, we denote $p$'s {\it leading monomial} by $\lm\left(p\right)$,
its {\it leading coefficient} by $\lc\left(p\right)$, and write
$\lt\left(p\right)=\lc\left(p\right)\lm\left(p\right)$ for its {\it leading
term}. For any two polynomials
$p,q \in \rp$ we use the shorthand notation
\[ \tau(p,q) = \lcm\left(\lm(p),\lm(q)\right)\]
for the {\it least common multiple} of their leading monomials.

Let $\sigvar_{1},\ldots,\sigvar_{m}$ be the canonical generators
of the free $\rp$-module $\rp^{m}$. We extend the ordering $<$ to a
well-ordering $\prec$ on the set $\left\{ t e_j \mid~t\in \monset, 1\leq j \leq
m\right\}$ in the following way\footnote{Note that this differs slightly from
  the ordering given in~\cite{fF52002Corrected}, but our discussion is mainly based
    on~\cite{epSig2011}. Moreover, it simplifies notation.}:
$t_j\sigvar_{j}\siglt t_k\sigvar_{k}$
iff $j<k$, or $j=k$ and $t_j < t_k$.
We define maps
\begin{center}
$
\begin{array}{rccc}
\pi : &\rp^{i} &\rightarrow &I_{i}\\
    &\sum_{j=1}^{i} p_je_j &\mapsto &\sum_{j=1}^i p_jf_j,
\end{array}
$
\end{center}
where $p_j$ is a polynomial in $\rp$ for $1\leq j \leq i$. An element $\omega \in
\rp^{i}$ with $\pi(\omega) = 0$ is called a {\it syzygy of $f_1,\dotsc,f_{i}$}. The
module of all such syzygies is denoted $\syz(F_i)$. A syzygy of type
$f_k e_j - f_j e_k$ is called a {\it principal syzygy}. The submodule of all principal
syzygies of $F_i$ is denoted $\psyz(F_i) \subseteq
\syz(F_i)$. Note that if a sequence $F_i$ of polynomials is regular, then
$\psyz(F_i) = \syz(F_i)$.

Let $f_{i+1}\in \rp \backslash I_{i}$. We describe algorithms that, given a Gr\"obner basis $G_{i}$ of $I_{i}$, computes a
Gr\"obner basis of $I_{i+1}=\langle F_{i+1}\rangle$, where $F_{i+1}=\left(f_{1},\ldots,f_{i+1}\right)$.
Thus we restrict ourselves to an incremental approach in this paper.

In \cite{epSig2011} the class of signature-based Gr\"obner basis algorithms is
introduced. Those give a new point of view on the computations taking so-called
{\it signatures} into account.

\begin{defi}
\label{def:sig}
 Let $p\in I_{i+1}$, $j\in\mathbb{N}$
with $j\leq i+1$, and $h_{1},\ldots,h_{j}\in \rp$ such that $h_{j}\neq0$
and\[
p=h_{1}f_{1}+\cdots h_{j}f_{j}.\]
\begin{enumerate}
\item If $t=\lm\left(h_{j}\right)$, we
say that $t\sigvar_{j}$ is {\em a signature} of $p$. Let $\sigset$
be the set of all signatures:\[
\sigset=\left\{ t\sigvar_{j} \mid ~t\in\monset,~j=1,\ldots,i+1\right\} .\]
\item Using the well-ordering $\siglt$ on $\sigset$ we can identify for each $p\in
\rp$ a unique, minimal signature.
\item For any monomial $u \in \monset$ and any signature $te_j \in \sigset$ we
define a multiplication $u \cdot te_j := (ut)e_j$.
\item An element $f = (te_j,p) \in \sigset \times I_{i+1}$ is called a {\em labeled
  polynomial}. For a labeled polynomial $f=(te_j,p)$ we define the shorthand
  notations $\poly(f) = p$, $\sig(f) = te_j$, and $\idx(f) = j$. Talking about the leading
  monomial, leading term, and leading coefficient of a labeled polynomial $f$ we
  always assume the corresponding value of $\poly(f)$. In the same sense we
  define the least common multiple of two labeled polynomials $f$ and $g$,
  $\tau(f,g)$, by $\tau\left(\poly(f),\poly(g)\right)$. Furthermore, if $G =
  \{g_1,\dotsc,g_{\ell}\} \subset \sigset \times I_{i+1}$, then we define
  \[\poly(G) := \left\{\poly(g_1),\dotsc,\poly(g_{\ell})\right\} \subset I_{i+1}.\]
\item A {\em critical pair of two labeled polynomials $f$ and $g$} is a tuple $(f,g) \in \left(\sigset \times I_{i+1}\right)^2$.
\item Moreover, we define the
{\em s-polynomial of two labeled polynomials $f$ and $g$} by 
\[\spol{f}{g}=\left(\omega,u_f\cdot \poly(f)-\frac{\lc(f)}{\lc(g)}u_g\cdot
    \poly(g)\right)\]
where
\begin{enumerate}
\item $u_f =\frac{\tau(f,g)}{\lm\left(f\right)}$, $u_g
=\frac{\tau(f,g)}{\lm\left(g\right)} \in \monset$, and 
\item $\omega = \max\left\{
u_f\sig(f),u_g\sig(g) \right\}$. 
\end{enumerate}
\end{enumerate}
\end{defi}

Adopting the notions of reduction and standard representation from the pure polynomial setting
we get:
\begin{defi}
Let $f,g \in \sigset \times I_{i+1}$ be labeled polynomials, and let $G\subset \sigset
\times I_{i+1}$, with $\#G=\ell$.
\begin{enumerate}
\item {\em $f$ reduces sig-safe to $g$ modulo $G$} if there exist sequences 
$j_{1},\ldots,j_{\ell}\in\mathbb{N}$, $t_{1},\ldots,t_{\ell}\in\monset$,
  $c_{1},\ldots,c_{\ell}\in\field$,
and $r_{0},\ldots,r_{\ell}\in \sigset \times I_{i+1}$ such that for all $i\in
\{1,\ldots,\ell\}$ $g_{j_{i}} \in G$,
\begin{enumerate}
\item $r_{0} = f$, $r_{i}=r_{i-1}-c_{i}t_{i}g_{j_{i}}$, $r_{l} =g$,
\item $\lm\left(r_{i}\right)<\lm\left(r_{i-1}\right)$, and
\item $t_i\sig\left(g_{j_{i}}\right)\siglt\sig\left(r_{i-1}\right)$.
\end{enumerate}
\item We say that $f$ has a {\em standard representation with respect to $G$} if there
exist $h_{1},\ldots,h_{\ell}\in \rp$, $g_1,\dotsc,g_{\ell} \in G$  such that
\begin{enumerate}
\item $\poly(f)=h_{1}\poly(g_{1})+\cdots+h_{\ell}\poly(g_{\ell})$, 
\item for each $k=1,\ldots,\ell$ either $h_{k}=0$, or
\begin{enumerate}
\item $\lm\left(h_{k}\right)\lm\left(g_{k}\right)\leq\lm\left(f\right)$, and 
\item $\lm\left(h_{k}\right)\sig(g_{k})\sigle \sig(f)$. 
\end{enumerate}
\end{enumerate}
\end{enumerate}
\end{defi}
\begin{rem}\
\begin{enumerate}
\item If $f$ reduces sig-safe to $g$ modulo $G$, then it has a standard representation
modulo $G$. Moreover, note that the concept of sig-safeness, that means the
restriction of the reducer $g_{j_{i}}$ by $t_i\sig\left(g_{j_{i}}\right)\siglt\sig\left(r_{i-1}\right)$ in
each step, is essential for the correctness (and the performance) of signature-based algorithms.  
\item In Fact~24 of \cite{epSig2011} it is shown that it is sufficient to
consider signatures with coefficient $1$. Thus there is no need to consider
$\lc(h_{j})$ in Definition~\ref{def:sig} resp. terms in general for signatures.
\end{enumerate}
\end{rem}

The following statement is the signature-based counterpart of Buchberger's
Criterion, see~\cite{bGroebner1965}.

\begin{thm}
\label{thm:gb} 
Let $G_{i+1} = \left\{g_1,\dotsc,g_{\ell}\right\}
\subset \sigset \times I_{i+1}$ such that $\left\{f_1,\dotsc,f_{i+1}\right\}
\subset \poly(G_{i+1})$. If for each pair $(j,k)$ with $j>k$, $1\leq j,k
\leq \ell$, $\spol{g_j}{g_k}$ has a standard representation w.r.t. $G_{i+1}$,
  then $\poly(G_{i+1})$ is a Gr\"obner basis of $I_{i+1}$.
\end{thm}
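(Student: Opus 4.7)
The plan is to imitate the classical proof of Buchberger's criterion, noting that the statement concerns only the polynomial content: although the hypothesis provides a sig-safe standard representation, to conclude that $\poly(G_{i+1})$ is a Gr\"obner basis of $I_{i+1}$ I only need the purely polynomial part, namely $\poly(\spol{g_j}{g_k}) = \sum_{k} h_{k} \poly(g_{k})$ with $\lm(h_{k})\lm(g_{k}) \leq \lm(\poly(\spol{g_j}{g_k}))$ for every nonzero $h_k$. The signature conditions can simply be discarded for this argument.

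Fix an arbitrary $f \in I_{i+1}$; the goal is to show $\lm(g_{k}) \mid \lm(f)$ for some $g_{k} \in G_{i+1}$. Since $\{f_{1},\dotsc,f_{i+1}\} \subseteq \poly(G_{i+1})$, the set $\poly(G_{i+1})$ generates $I_{i+1}$ as an ideal, so I can write $f = \sum_{k=1}^{\ell} h_{k} \poly(g_{k})$ for some $h_{k} \in \rp$. Among all such representations I pick one that first minimizes
\[
T := \max\bigl\{\lm(h_{k})\lm(g_{k}) : h_{k} \neq 0\bigr\}
\]
with respect to $<$, and secondarily minimizes the number of indices $k$ attaining this maximum.

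If $T = \lm(f)$, then the leading monomials of the $h_{k}\poly(g_{k})$ cannot all cancel at degree $T$, so at least one term of the form $\lm(h_{k})\lm(g_{k}) = \lm(f)$ survives, and $\lm(g_{k})$ divides $\lm(f)$. Otherwise $T > \lm(f)$, so the top-degree terms must cancel; in particular there exist distinct indices $j,k$ with $\lm(h_{j})\lm(g_{j}) = \lm(h_{k})\lm(g_{k}) = T$. The standard trick is to split the pair $h_{j}\poly(g_{j}) + h_{k}\poly(g_{k})$ as a scalar multiple of a shift of $\spol{g_{j}}{g_{k}}$ plus a lower-order remainder. Using the hypothesis that $\spol{g_{j}}{g_{k}}$ admits a standard representation w.r.t.~$G_{i+1}$, I substitute that representation into the expression for $f$: every summand produced has leading monomial strictly below $T$. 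Regrouping, I obtain a new representation $f = \sum h_{k}' \poly(g_{k})$ whose invariant $(T, \#\{k : \lm(h_{k})\lm(g_{k}) = T\})$ is strictly smaller in the chosen ordering, contradicting minimality.

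The main obstacle is purely bookkeeping: one must verify carefully that the leading-term cancellation between $h_{j}\poly(g_{j})$ and $h_{k}\poly(g_{k})$ factors through $\spol{g_{j}}{g_{k}}$ in a way compatible with the standard-representation bound, so that after substitution the leading monomials of all new summands remain strictly below $T$. Once this substitution lemma is in place, the rest is the usual descent argument, and no use of $\sigvar_{j}$ or $\sig(\cdot)$ is needed for the Gr\"obner basis conclusion.
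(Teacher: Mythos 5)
Your proof is correct and is essentially the argument the paper defers to its references for: a standard representation in the signature sense already yields a classical Lazard-type representation, since $\lm(h_k)\lm(g_k)\leq\lm\bigl(\poly(\spol{g_j}{g_k})\bigr) < \tau(g_j,g_k)$, and the familiar descent on a minimal representation of an arbitrary $f\in I_{i+1}$ then shows $\poly(G_{i+1})$ is a Gr\"obner basis without ever touching the module component. Your key observation, that the signature bound $\lm(h_k)\sig(g_k)\sigle\sig(\spol{g_j}{g_k})$ in the definition of standard representation is extra data irrelevant to the Gr\"obner-basis-ness of $\poly(G_{i+1})$, is exactly right.
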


\begin{proof}
For example, see~\cite{epF5C2009,epSig2011}.
\end{proof}

In the non-signature-based setting, an algorithm based
on the Buchberger Criterion only would be quite inefficient.
There the Product Criterion and the Chain Criterion,
      see~\cite{bGroebner1965,kollBuchberger1978} are used to reduce
useless computations; a notable implementation can be found
in~\cite{gmInstallation1988}.
On the signature-based side the very same holds: We need criteria to improve the
computations, see~\cite{epSig2011} for more details on this topic.

In the following we assume an incremental signature-based Gr\"obner basis
algorithm, for example, the reader can think of $\ff$'s
presentation in~\cite{fF52002Corrected}, $\ggv$ as stated in~\cite{ggvGGV2010},
or the more general discussion in~\cite{epSig2011}. 

Next we state the basic criteria used in $\ff$. It is important to start from
$\ff$'s point of view to see how optimizations in the signature-based world can
be achieved. Note that we use the notations introduced in~\cite{epSig2011} for
an easier adoption to incremental signature-based algorithms in general
later on.

\begin{lem}
\label{lem:criteria}
Assume the computation of a Gr\"obner basis $\poly(G_{i})$ for $I_{i}$, that means we are in
the $i$th incremental step of $\ff$.
\begin{enumerate}
\item\label{nmcriterion} {\em Non-minimal signature criterion (NM)}: $\spol{f}{g}$ has a standard
representation w.r.t. $G_{i}$ if there exists an element $\omega \in
\psyz(F_{i})$ with $\lm(\omega) = t_{\omega}e_{j_{h}}$ such that 
\begin{enumerate}
\item $t_{\omega} \mid u_h t_h$,
\end{enumerate}
where $u_h \sig(h) = u_h t_h e_{j_{h}}$ for either $h=f$ or $h=g$. 
\item {\em Rewritable signature criterion (RW)}: $\spol{f}{g}$ has a standard
representation w.r.t. $G_{i}$ if there exists a labeled polynomial $r$ such that 
\begin{enumerate}
\item $\idx(r) = \idx(h)$,
\item $\sig(r) \succ \sig(h)$, and
\item $t_r \mid u_h t_h$,
\end{enumerate}
where $\sig(r)=t_r e_{j_h}$ and $u_h \sig(h) = u_h t_h e_{j_h}$ for either $h=f$ or $h=g$.
\end{enumerate}
\end{lem}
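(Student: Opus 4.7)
The plan is to prove both parts simultaneously by Noetherian induction on the signature $\prec$, closely following the pattern used to prove the signature-based Buchberger criterion (Theorem~\ref{thm:gb}). The base case is trivial since signatures are well-ordered. In each inductive step I assume that every labeled polynomial with signature strictly below some $\omega \in \sigset$ already admits a standard representation w.r.t.\ $G_i$, and then I show that $\spol{f}{g}$ admits such a representation whenever its signature equals $\omega$. Without loss of generality, let $h \in \{f,g\}$ be the one realizing the maximum in the definition of $\spol{f}{g}$, so that $\sig\bigl(\spol{f}{g}\bigr) \preceq u_h \sig(h) = u_h t_h \sigvar_{j_h}$.

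For part~(\ref{nmcriterion}), the idea is to use the given principal syzygy to lower the signature of the representation of $\spol{f}{g}$. Since $t_\omega \mid u_h t_h$, I would set $v = u_h t_h / t_\omega \in \monset$; then $v\omega \in \psyz(F_i)$ has leading monomial $u_h t_h\sigvar_{j_h}$. Adding a suitable scalar multiple of $v\omega$ to the module representation of $\spol{f}{g}$ cancels its leading signature term and yields a new module element with the same image $\pi(\cdot) = \poly\bigl(\spol{f}{g}\bigr)$ but strictly smaller signature. Applying the inductive hypothesis to this element delivers the desired standard representation.

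For the rewritable criterion, I would play essentially the same game with the rewriter $r$ in place of the syzygy. Setting $v = u_h t_h / t_r$, the labeled multiple $v\cdot r$ has signature $v\,t_r \sigvar_{j_h} = u_h t_h \sigvar_{j_h}$, identical to the signature of $\spol{f}{g}$. Subtracting $\frac{\lc(h)}{\lc(r)}\, v\cdot r$ from the natural representation of $\spol{f}{g}$ produces a labeled polynomial whose signature is strictly less than $u_h t_h \sigvar_{j_h}$, because the leading signature terms cancel. Crucially, the condition $\sig(r) \succ \sig(h)$ guarantees that $r$ itself lies in the regime where the inductive hypothesis already applies (so that $v\cdot r$ reduces to a standard representation w.r.t.\ $G_i$), and combining that with the inductive standard representation of the difference yields the claim.

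The main technical obstacle in both arguments is bookkeeping of the signatures under the cancellation step: one has to verify that the subtracted term $c\, v\omega$ (resp.\ $\tfrac{\lc(h)}{\lc(r)} v\cdot r$) genuinely kills the leading signature term, so that the resulting element has signature strictly below $\omega$ and the induction actually progresses. This amounts to a careful comparison of signatures using the extension of $<$ to $\prec$, together with the fact—recorded in the remark following Definition~\ref{def:sig} (via Fact~24 of \cite{epSig2011})—that one may restrict attention to signatures with coefficient one. Once this signature drop is certified, the conditions (a)--(c) in the definition of a standard representation follow by concatenating the pieces produced by the inductive hypothesis, exactly as in the corresponding proofs given in~\cite{epF5C2009,epSig2011}.
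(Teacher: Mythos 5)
The paper does not re-derive this lemma: its ``proof'' consists of a citation to Theorem~18 of~\cite{epF5C2009} together with a remark that the argument there also covers the case $\idx(h) < i$. Your Noetherian-induction-on-signature sketch is exactly the approach underlying that cited theorem, and your treatment of (NM) is sound: subtracting $cv\omega$ from the preimage of $\spol f g$ under $\pi$ leaves the polynomial unchanged, cancels the dominant signature term, and produces a representative of strictly smaller signature, at which point the inductive hypothesis finishes.

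Your treatment of (RW), however, contains a genuine gap. The claim that ``$\sig(r) \succ \sig(h)$ guarantees that $r$ itself lies in the regime where the inductive hypothesis already applies'' is wrong on two counts. First, $\sig(r) \succ \sig(h)$ is a \emph{lower} bound on $\sig(r)$ and places $r$ below nothing; the upper bound $\sig(r) \preceq u_h\sig(h)$ comes instead from $t_r \mid u_h t_h$. Second, and more seriously, the object you actually subtract is $v \cdot r$, whose signature is $v t_r e_{j_h} = u_h t_h e_{j_h}$ -- \emph{equal} to the signature of $\spol f g$, not strictly smaller -- so the inductive hypothesis cannot be invoked for it. The correct justification is algorithmic, not inductive: $r$ is by construction a rewrite rule already recorded by $\ff$ (either an element already in $G_i$ or an s-polynomial already processed to normal form or to zero), so $\poly(r)$ already carries a sig-safe representation with respect to $G_i$ established when $r$ was handled; multiplying it by $v$ preserves the signature bounds. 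The condition $\sig(r)\succ\sig(h)$ serves only to guarantee $r$ is a genuine, later rewriter rather than (a multiple of) $h$ itself. Finally, the closing ``concatenation'' step glosses over the fact that adding two standard representations whose leading terms may cancel does not automatically remain a standard representation for the sum; one has to argue via the minimal signature of the difference (or a secondary induction on $\lm$), exactly as in the references you point to.
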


\begin{proof}
The two statements of Lemma~\ref{lem:criteria} are included in Theorem~18
of~\cite{epF5C2009}.
Note that there (NM) is considered only for $h$ being an element generated in the
current iteration step, that means $\idx(h) = i$. Reviewing the proof one easily
sees that the situation of $\idx(h)<i$ is just a special case already considered
by the proof: There the two 
signatures $u_f \sig(f)$ and $u_g \sig(g)$ refer to $\mathcal{M}$ and 
$\mathcal{N}$, where $\mathcal{M} \succ \mathcal{N}$. In our situation $u_h 
\sig(h)=\mathcal{N}$, and any principal syzygy $\omega$ with the above
mentioned properties can only decrease $\mathcal{N}$. The statement then follows by
the very same argumentation as in the proof given in~\cite{epF5C2009}.
\end{proof}

\begin{rem}\
\begin{enumerate}
\item Note that all signature-based algorithms have in common that they handle
their s-polynomials by increasing signature. This even holds for $\ff$, also
there the critical pairs are presorted by the degree of the pairs. Since $\ff$,
as presented in \cite{fF52002Corrected,epF5C2009}, works only with homogeneous input,
this does not interfere an ordering w.r.t. increasing signatures. By the
discussion in~\cite{epSig2011} $\ff$ can also be used
for inhomogeneous input by removing the presorting of critical
pairs by increasing degree.
\item $\ffc$ and $\ggv$ implement (NM) and (RW) quite differently. An
in-depth discussion on this topic is given in \cite{epSig2011}. These
differences are explained in the following, too, inasmuch as they influence the
ideas of this paper. 
\item\label{rem:nmrw} The crucial fact for the optimization presented in this paper is that whereas 
checking (NM) is quite easy and cheap speaking in a computational manner, searching
for possible elements $r$ with which we can check (RW) costs many more CPU cycles. 
\end{enumerate}
\end{rem}

\section{Computational overhead}
\label{sec:ffc}
One of the main problems of signature-based Gr\"obner basis algorithms is the
overhead generated by the following kind of data: 
\begin{enumerate}
\item From the point of view of the resulting Gr\"obner basis the elements are useless, that
means the corresponding leading monomials are superfluous.
\item For the correctness of the algorithm the very same elements are crucial:
They are essential for the correct detection of useless critical pairs w.r.t.
(NM) and (RW).
\end{enumerate}

\begin{rem}
This characteristic is unique to signature-based algorithms and cannot be found in
other Buchberger-style Gr\"obner basis algorithms.
It does not only give a penalty on the performance, but unfortunately also causes problems with theoretical aspects,for example regarding the termination of $\ff$, see~\cite{egpF52011} for more
details.
\end{rem}

Next we state the pseudo code of the main loop of an incremental signature-based
Gr\"obner basis algorithm in the vein of $\ff$, denoted $\siggb$.
Note that we denote the incrementally called subalgorithm $\incsig$.

\begin{algorithm}[h]
\caption{\label{alg:ff}$\siggb$, an incremental signature-based Gr\"obner basis algorithm in
  the vein of $\ff$}
\begin{algorithmic}[1]

\REQUIRE{$F_m$}
\ENSURE{$G$, a Gr\"obner basis for $I_m$}
\STATE{$G_1 \gets \left\{(e_1,f_1)\right\}$}
\FOR{$(i=2,\dotsc,m)$}
\STATE{$f_i \gets \reduce\left(f_i,\poly(G_{i-1})\right)$}\label{sigstdr:reduce}
\IF{$(f_i \neq 0)$}
\STATE{$G_i \gets \incsig\left(f_i,G_{i-1}\right)$}
\ELSE
\STATE{$G_i \gets G_{i-1}$}
\ENDIF
\ENDFOR
\RETURN{$\poly(G_m)$}
\end{algorithmic}
\end{algorithm}

Let us start the discussion on computational overhead looking at $\ff$ as presented
in~\cite{fF52002Corrected} first, so the following disussions refers to Algorithm~\ref{alg:ff}. 
The first drawback is the computation of {\it non-minimal} intermediate
Gr\"obner bases.
\begin{enumerate}
\item Due to the fact that the signatures of the labeled polynomials must be 
kept valid during the (sig-safe) reductions taking place, some leading term 
reductions do not take place immediately, but are postponed. These reductions, 
needed to ensure correctness of the algorithm, are computed when generating 
new critical pairs later on\footnote{In~\cite{fF52002Corrected} this kind of generation
of new critical pairs is not postponed to the end of the current reduction
step, but those are added to the pair set in place. These two ways of
handling such a situation are nearly equivalent and do not trigger any
difference for the overall computations, see~\cite{epSig2011}. The way it is described here makes
it easier to see how the computational overhead is produced.}.
Thus at the end we could have three polynomials $\poly(f)$, 
$\poly(g)$, and $\poly(h)$ in $\poly(G_i)$ such that
\begin{enumerate}
\item $\lm(g) \mid \lm(f)$, but the reduction $f-ctg$ has not taken place due 
to $t\sig(g) \succ \sig(f)$, for some $c \in \field$, $t \in \monset$ such
that $\lt(f) = ct\lt(g)$.
\item $h$ is the result of the later on generated and reduced s-polynomial
$\spol{g}{f}=ctg-f$, which is sig-safe due to swapping $ctg$ and $f$.
\end{enumerate}
In the end, we only need two out of these three elements for a Gr\"obner basis; 
in a minimal Gr\"obner basis we would discard $\poly(f)$. The problem is that for the 
correctness of the ongoing incremental step of $\ff$ the labeled polynomial $f$
as well as its addition to $G_{i}$ is essential\footnote{See~\cite{egpF52011} for
more details, also on termination issues caused by this behaviour of signature-based algorithms.}:
Without adding $f$ to $G_{i}$ the critical pair 
$(g,f)$ would not be generated at all, thus the element $h$, possibly needed 
for the correctness of the Gr\"obner basis in the end, would never be computed.  
So we are not able to remove $f$ during the actual iteration step. 
\end{enumerate}

Clearly, in the same vein the problem of non-reducedness of the Gr\"obner basis 
$\poly(G_i)$, in particular, missing tail-reductions, can be understood.

\begin{enumerate}[resume]
\item Since $\ff$, when reducing with elements generated in the ongoing
iteration step, processes top-reductions only, elements can enter $G_i$
whose polynomials have tails not reduced w.r.t. $\poly(G_i)$.
The main argument for not doing complete reductions in 
this situation is the requirement of sig-safeness: Comparing the signatures before each possible tail-reduction can lead to quite worse 
timings. On the other hand, from the point of view of the resulting Gr\"obner basis 
$\poly(G_i)$, which consists only of polynomial data, we do not need 
to take care of sig-safeness and can tail-reduce the elements in $\poly(G_i)$ as usual 
without any preprocessed signature comparison. This is way faster than implementing 
tail-reductions during the iteration step, although we have to use the non-tail-reduced 
elements during a whole iteration step.
\end{enumerate}

From the above discussion we get the following situation:
\begin{enumerate}
\item The computational overhead {\it during} an iteration step is prerequisite 
for the correctness of $\ff$.
\item The set of labeled polynomials $G_i$ returned {\it after} the 
$i$th iteration step is used as input for the $(i+1)$st iteration step,
including the signatures.
\end{enumerate}

In~\cite{stF5Rev2005} Stegers found a way optimizing at least the reduction
steps w.r.t. elements of previous iteration steps. There the fact is used that
$\ff$ does not need to look for the signatures, due to the definition of $\prec$
all such reducers have a smaller index, and thus, a smaller signature: His variant of $\ff$ computes
another set of polynomials $B_i$ after each iteration step, namely the reduced
Gr\"obner basis of $I_{i}$ which is computed out of $\poly(G_i)$. In the
following iteration step reductions w.r.t. elements computed in previous
iteration steps are done by $B_i$, not by $\poly(G_i)$.

In~\cite{epF5C2009} the variant $\ffc$ of $\ff$ is presented, which is based on
the idea of Stegers, but goes way further:
$\ffc$ is an optimized version of $\ff$ interreducing the intermediate Gr\"obner
basis $\poly(G_i)$ to $B_i$ and uses these {\it polynomial data} as starting point for the
next iteration step. At this point we can look at Algorithm~\ref{alg:ff}
from~\cite{epSig2011}, which illustrates one single iteration step of incremental signature-based
Gr\"obner basis algorithms: Let $\ell=\# B_i $, any element $b_j \in B_i$ gets a new signature
$e_j$, so that we receive elements $g_j = (e_j,b_j)$ in $G_{i+1}$ for $1\leq j
\leq \ell$. $f_{i+1}$ is then added to $G_{i+1}$ by adjusting the index,
  $g_{\ell+1}
=(e_{\ell+1},f_{i+1})$. On the one hand, proceeding this way the corresponding signatures of
reduced polynomials are guaranteed to be correct from the view of the algorithm.
On the other hand, all previously available criteria for detecting useless critical pairs w.r.t. to labeled
polynomials of index $\leq \ell$ by (RW) in the upcoming iteration step are removed. 
Thus the question, if for the benefit of having less computational overhead is payed dearly by
less efficient criteria checks in the following iteration steps, needs to be asked.

The idea to get at least some data for using (RW) on elements of previous
iteration steps in the following can be summarized in this way:
\begin{enumerate}
\item For any two elements $g_j,g_k \in G_{i+1}$ with polynomial parts in $B_i$,
$\spol{g_j}{g_k}$ reduces to zero w.r.t. $G_{i+1}$, that means, it already has a
standard representation w.r.t. $G_{i+1}$ (since $B_i$ is already a reduced
    Gr\"obner basis). Moreover, we can quite easily get the
corresponding signatures of $\spol{g_j}{g_k}$ due to the fact that the
labels of the generating elements, $e_j$ and $e_k$, are trivial. 
\item With this in mind, we can create for each index $2\leq j \leq \ell$ a list of
signatures generated out of $\sig\left(\spol{g_j}{g_k}\right)$ for $k<j$. Those
signatures can be used by (RW) detecting useless critical pairs in the next
iteration step.
\end{enumerate}

Clearly, one wants to omit such a recomputation of signatures during two
iteration steps as much as possible. Luckily it is shown in~\cite{epF5C2009} that this is not
a problem at all:

\begin{prop}
\label{prop:ffc}
Let $\spol{f}{g}$ be any s-polynomial considered during an iteration step of
$\ff$ with $\idx(g) < \idx(f)$. Assume that $\frac{\tau(f,g)}{\lm(g)} g$ would be detected either by
(NM) or (RW). Then $\spol{f}{g}$ is also discarded in $\ffc$.
\end{prop}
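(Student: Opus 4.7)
The plan is to split the argument into two cases according to whether $u_g g$ is detected in $\ff$ by (NM) or by (RW). Throughout, write $j_g = \idx(g)$ in $\ff$ and let $\ell = \#B_i$; in $\ffc$ the labeled polynomial corresponding to $g$ carries the trivial signature $e_j$ for some $j \leq \ell$, and $f$ carries $e_{\ell+1}$, so that $u_g \sig(g) = u_g e_j$ in $\ffc$ rather than $u_g t_g e_{j_g}$ as in $\ff$. The shared setup is that $\ffc$ enters iteration $i+1$ with both the reduced Gr\"obner basis $B_i$ and the list of precomputed s-polynomial signatures of the pairs inside $B_i$ available as rewriter data, since every such s-polynomial reduces to zero in the previous iteration of $\ffc$.

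For the case (NM), I use the principal syzygy $\omega = f_b e_a - f_a e_b \in \psyz(F_i)$ with $b = j_g$ and $\lm(f_a) \mid u_g t_g$. Since $B_i$ is a Gr\"obner basis of $I_i$, some $b_k \in B_i$ satisfies $\lm(b_k) \mid \lm(f_a)$. I then pass to the corresponding principal syzygy of $\{b_1,\ldots,b_\ell,f_{i+1}\}$ (up to the relative ordering of $j$ and $k$) and verify that its leading monomial yields a (NM)-certificate for $u_g g$ in $\ffc$. The crucial step is to transfer the divisibility $\lm(f_a) \mid u_g t_g$ from $\ff$ to the divisibility $\lm(b_k) \mid u_g$ in $\ffc$; this relies on the fact that interreducing $\poly(G_i)$ to $B_i$ only shrinks leading monomials and that the nontrivial $\ff$-signature monomial $t_g$ has been absorbed into the trivial $\ffc$-signature $e_j$.

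For the case (RW), I trace the rewriter $r$ with $\sig(r) = t_r e_{j_g}$ back to the s-polynomial computation in the $j_g$th iteration step that produced $r$'s signature. Because the corresponding polynomial lies in $I_{j_g} \subseteq I_i$ and the interreduction passes its leading-term and signature information into $B_i$, a compatible rewriter signature is present in the precomputed list attached to $\ffc$ at the end of iteration $i$. Applying (RW) with that stored signature then discards $(f,g)$ in $\ffc$.

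The main obstacle I foresee is the careful bookkeeping needed to match $\ff$-signatures of the form $t_h e_{j_h}$ to the trivial $\ffc$-signatures $e_j$: the divisibilities in both criteria involve the monomial parts of the signatures, so I must verify that these divisibilities survive the transfer of information from $\poly(G_i)$ to $B_i$. Once this translation is pinned down, each case reduces to a direct application of the corresponding $\ffc$-criterion with the newly exhibited principal syzygy or stored s-polynomial signature.
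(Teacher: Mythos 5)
The paper does not actually prove Proposition~\ref{prop:ffc} in-text: the proof environment (shared with Corollary~\ref{cor:ffc}) simply defers to Theorem~27 and Corollary~28 of~\cite{epF5C2009}, and the remark following the corollary explicitly says as much. So there is no in-paper argument to compare against; I can only assess whether your outline would go through, and I see several genuine gaps.

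Your main worry, which you flag yourself, is exactly where the argument breaks. In the (NM) case you have $\lm(f_a)\mid u_g t_g$ and, since $B_i$ is a Gr\"obner basis of $I_i$, some $b_k\in B_i$ with $\lm(b_k)\mid\lm(f_a)$. This chains to $\lm(b_k)\mid u_g t_g$, but the criterion you then want to apply in $\ffc$ needs $\lm(b_k)\mid u_g$, since the $\ffc$-signature of $g'$ is the trivial $e_j$ and the multiplier is bare $u_g$. These two divisibilities differ by precisely the signature monomial $t_g$, and the appeal to ``$t_g$ being absorbed into the trivial signature'' is an intuition, not a divisibility argument; in general $\lm(b_k)\nmid u_g$ even when $\lm(b_k)\mid u_g t_g$. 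You need a different mechanism to show the pair is still discarded in $\ffc$ --- for example via the criteria check on the higher-index generator $f'$, or via the concrete $\ffc$ rewrite rules $\frac{\tau(b_j,b_k)}{\lm(b_j)}e_j$ recorded after interreduction --- and the paper's remark that ``correctness of Proposition~\ref{prop:ffc} is based on the fact that $\ffc$ uses (RW) as presented above'' strongly hints that the real argument is tied to that specific (RW) implementation rather than to a one-for-one translation of the (NM) certificate.

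Two further gaps: (i) Your (RW) case asserts that ``a compatible rewriter signature is present in the precomputed list,'' but gives no reason why the $\ff$-rewriter $t_re_{j_g}$ with $t_r\mid u_g t_g$ should correspond to any stored signature $\frac{\tau(b_j,b_k)}{\lm(b_j)}e_j$ dividing $u_g$; the same $t_g$-factor mismatch reappears here. (ii) You implicitly assume $g$ has a counterpart $b_j\in B_i$ with the same leading monomial. But $\poly(G_i)$ typically contains elements whose leading monomials are redundant and disappear under interreduction; for such $g$ the pair $(f',g')$ is never formed in $\ffc$ at all. That case is trivial but must be stated, since your argument as written presupposes a bijection that need not exist. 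The high-level case split into (NM) and (RW) is reasonable, but as it stands the (NM) transfer is unjustified, the (RW) step is unsupported, and the redundant-$g$ case is missing.
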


\begin{cor}
\label{cor:ffc}
For an s-polynomial in $\ffc$ it is enough to check a generator $f$ by (NM) resp.
(RW) if $f$ was computed during the current iteration
step.
\end{cor}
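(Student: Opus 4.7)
The corollary is essentially a direct packaging of Proposition~\ref{prop:ffc}, so my plan is to extract it from the proposition by a short case analysis on the index of the generators appearing in an s-polynomial.

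First I would unfold the statement. A critical pair in $\ffc$ during the $(i+1)$st iteration step has the form $(f,g)$ where each of $f,g$ either lies in $B_i$ (with a trivial signature $e_j$, $j\leq\ell$) or was produced during the current step (with signature involving $e_{\ell+1}$). Deciding the pair with (NM)/(RW) amounts to checking, for $h\in\{f,g\}$, whether $\frac{\tau(f,g)}{\lm(h)}\sig(h)$ is caught by one of the criteria; I want to show that it is harmless to skip this test whenever $h$ is an element of $B_i$.

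Next I would split into cases on $(\idx(f),\idx(g))$. If both $f$ and $g$ are current-iteration elements, there is nothing to prove. If both lie in $B_i$, then $\poly(f),\poly(g)\in B_i$ and $B_i$ is already a reduced Gr\"obner basis, so $\spol{f}{g}$ immediately admits a standard representation w.r.t. $\poly(G_{i+1})$ and the check is vacuous. The only interesting case is $\idx(g)<\idx(f)$ with $f$ from the current step and $g$ from $B_i$: here the only criterion tests that would be dropped by the corollary are those applied to the \emph{lower-index} multiple $\frac{\tau(f,g)}{\lm(g)}\sig(g)$. Proposition~\ref{prop:ffc} is tailor-made for exactly this situation: it asserts that if that multiple of $g$ would be discarded by (NM) or (RW) in $\ff$, then $\spol{f}{g}$ is already discarded in $\ffc$ (in practice, by the recomputed signatures attached to $B_i$ via $\sig(\spol{g_j}{g_k})$ as described before the proposition). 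Hence skipping the criteria on the side of $g$ loses no discards.

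Combining the cases, the only side on which (NM) and (RW) ever need to be evaluated is the side whose generator was computed during the current iteration step, which is precisely the statement of the corollary. The main (and only) subtlety I expect is bookkeeping: making sure that in the mixed case the pair really is caught by $\ffc$'s own criterion machinery rather than by a criterion we just declared we do not need to run, but this is exactly what Proposition~\ref{prop:ffc} guarantees, so the argument reduces to citing it.
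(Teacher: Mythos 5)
Your proposal is correct in its main line: the corollary is indeed a direct packaging of Proposition~\ref{prop:ffc}, and the paper itself only cites Theorem~27/Corollary~28 of~\cite{epF5C2009} without further argument, so your short case analysis on $(\idx(f),\idx(g))$ is exactly the right way to unfold it. The two degenerate cases (both generators current, both in $B_i$) are dispatched correctly, and the mixed case is precisely the situation handled by the proposition.

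One small remark on your parenthetical in the mixed case: you attribute the discard ``in practice'' to the recomputed signatures $\sig(\spol{g_j}{g_k})$ attached to $B_i$, but this inverts the point. The purpose of Proposition~\ref{prop:ffc} and of Corollary~\ref{cor:ffc} is precisely to show that $\ffc$ discards $\spol{f}{g}$ \emph{without} having to recompute those signatures for (RW) on lower-index generators --- that is the content of the sentence immediately following the corollary (``we do not need to recompute any signature after interreducing the intermediate Gr\"obner basis''). The discard happens through $\ffc$'s own machinery applied on the current-index side, not through signatures reconstructed for $B_i$. This does not affect the validity of your argument, which only uses the conclusion of the proposition, but the aside as phrased suggests a mechanism that the corollary is designed to make unnecessary.
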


\begin{proof}
See Theorem~27 resp. Corollary~28 in~\cite{epF5C2009}.
\end{proof}

Thus it follows that we do not need to recompute any signature after
interreducing the intermediate Gr\"obner basis $\poly(G_i)$ for checks with (RW).

Let us add the above ideas in the pseudo code of Algorithm~\ref{alg:ffc}. We
highlight the new step of interreducing the intermediate Gr\"obner basis,
          differing from the description of Algorithm~\ref{alg:ff}. There are two main changes: 
\begin{enumerate}
\item $G_i$ now denotes already a set of polynomials in
$\rp$, as the labeled polynomials are newly generated at the beginning of each
incremental step.
\item Instead of $\incsig$ we use a new algorithm $\incsigc$ which takes a
reduced Gr\"obner basis $B_{i-1}$ as a second argument. Note that for $\incsigc$ we
refer the reader to Algorithm~1 in~\cite{epSig2011}.
\end{enumerate}

\begin{algorithm}[h]
\caption{\label{alg:ffc}$\siggb$ with reduced intermediate Gr\"obner bases}
\begin{algorithmic}[1]

\REQUIRE{$F_m$}
\ENSURE{$G$, a Gr\"obner basis for $I_m$}
\STATE{$G_1 \gets \{f_1\}$}
\FOR{$(i=2,\dotsc,m)$}
\STATE{\textcolor{newcolor}{$B_{i-1} \gets 
\redsb\left(G_{i-1}\right)$}}\label{sigstdr:red}
\STATE{$f_i \gets \reduce(f_i,B_{i-1})$}\label{sigstdr:reduce}
\IF{$(f_i \neq 0)$}
\STATE{$G_i \gets \incsigc(f_i,B_{i-1})$}
\ELSE
\STATE{$G_i \gets G_{i-1}$}
\ENDIF
\ENDFOR
\RETURN{$G_m$}
\end{algorithmic}
\end{algorithm}

\begin{rem}\
\begin{enumerate}
\item Also we do not state proofs for Proposition~\ref{prop:ffc} and
Corollary~\ref{cor:ffc} here, but only refer
to~\cite{epF5C2009}, it is important to note that correctness of
Proposition~\ref{prop:ffc} is based on the fact that $\ffc$ uses (RW) as presented
above. $\ggv$ uses a relaxed variant of (RW), see \cite{epSig2011} for more
details. Still, $\ggv$ uses Algorithm~\ref{alg:ffc} 
as an outer loop for its incremental computations.
\item In the following we can consider incremental
signature-based Gr\"obner basis algorithms in general, i.e., 
Algorithm~\ref{alg:ffc} can be seen as a wrapper for $\ffc$ and $\ggv$.
\end{enumerate}
\end{rem}

Having a common basis of our algorithms in question, let us see next how the
initial presentation of $\ggv$ improved the field of signature-based computations.

\section{Using reductions to zero}
\label{sec:ggv}
As described in~\cite{epSig2011} $\ggv$ can be seen a variant of $\ffc$ using a
way more relaxed version of (RW): $\ggv$ only checks if the corresponding
s-polynomials of two critical pairs have the same signature when adding the
pairs to the pair set. In this situation only one of these two pairs is kept, the
other one is discarded. We refer to~\cite{epSig2011} for more details.

Thus $\ggv$'s efficiency is mainly based on its optimized variant of (NM). The idea can be
explained quite easily: Whereas $\ffc$ uses only principal syzygies for (NM),
          since they are known beforehand and can be precomputed, $\ggv$ goes
          one step further: 

\begin{defi}
\label{def:ggvcrit}
During the $(i+1)$st iteration step of $\incsigc$ we define
\begin{align*}
S_{i+1} := \{ te_{\ell+1} \in \sigset \mid~&te_{\ell+1} \textrm{ signature of an
  s-polynomial}\\
    &\textrm{that reduced sig-safe to zero }\},
\end{align*}
where $\ell+1$ is the current index of the labeled polynomials.
\end{defi}

\begin{lem}[Improved (NM)]
\label{lem:ggv}
Let $\ell+1$ be the current index of the $(i+1)$st incremental step of $\ggv$
computing a Gr\"obner basis $\poly(G_{i+1})$ for $I_{i+1}$. $\spol{f}{g}$ has a standard
representation w.r.t. $G_{i+1}$ if there exists an element $\omega \in
\psyz\left(B_{i} \cup \{f_{i+1}\}\right) \cup S_{\ell+1}$ with $\lm(\omega) =
t_{\omega}e_{\ell+1}$ such that $t_{\omega} \mid u_h t_h$,
where $u_h \sig(h) = u_h t_h e_{\ell+1}$ for either $h=f$, $\idx(f)=\ell+1$ or
$h=g$, $\idx(g) = \ell+1$. 
\end{lem}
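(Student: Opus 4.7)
The plan is to reduce the statement to the basic (NM) criterion of Lemma~\ref{lem:criteria}, adapted to the new generating system used at the start of the $(i+1)$st iteration. Recall that after interreduction, $\incsigc$ starts from $B_i \cup \{f_{i+1}\}$, regenerating labels so that each $b\in B_i$ carries signature $e_j$ and $f_{i+1}$ carries $e_{\ell+1}$. Thus the module of syzygies we may legitimately exploit during this iteration is $\syz\!\left(B_i\cup\{f_{i+1}\}\right)$. The criterion in Lemma~\ref{lem:criteria} was stated for principal syzygies only because those are the ones known a priori, but its proof in~\cite{epF5C2009} actually uses nothing more than the existence of a syzygy $\omega$ whose leading monomial $t_\omega e_{j_h}$ divides $u_h t_h$: using $\omega$ one rewrites $u_h\poly(h)$ as a combination involving generators of strictly smaller signature, which reduces the signature $\mathcal{M}=u_h\sig(h)$ of $\spol{f}{g}$ and therefore yields a standard representation.

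With this observation in hand, I would split the proof into two cases according to where $\omega$ lives. The first case, $\omega\in\psyz\!\left(B_i\cup\{f_{i+1}\}\right)$, is an immediate application of Lemma~\ref{lem:criteria}(\ref{nmcriterion}) to the current generating set, together with the remark in the proof of that lemma that the argument is insensitive to whether $\idx(h)=\ell+1$ or $\idx(h)<\ell+1$; in both sub-cases the principal syzygy strictly decreases the signature of the $h$-summand in the representation of $\spol{f}{g}$.

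The second case, $\omega\in S_{\ell+1}$, is the substantive content of the lemma. By Definition~\ref{def:ggvcrit}, $\omega=t_\omega e_{\ell+1}$ is the signature of some s-polynomial $\spol{p}{q}$ that was previously reduced sig-safe to zero during the current iteration. I would make this concrete by exhibiting an explicit syzygy $\tilde\omega\in\syz\!\left(B_i\cup\{f_{i+1}\}\right)$ with $\lm(\tilde\omega)=t_\omega e_{\ell+1}$: starting from the identity $u_p\poly(p)-\frac{\lc(p)}{\lc(q)}u_q\poly(q)$ and subtracting the sig-safe reduction chain that brought this s-polynomial to zero, one obtains a relation in $\rp$-linear combinations of $B_i\cup\{f_{i+1}\}$ whose signature is exactly $t_\omega e_{\ell+1}$ (sig-safeness guarantees that no reduction step raises the signature). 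Once $\tilde\omega$ is in hand, one invokes the same argument as in Case~1: $t_\omega\mid u_h t_h$ lets us replace $u_h\sig(h)$ by a strictly smaller signature in the combination for $\spol{f}{g}$, producing a standard representation w.r.t.\ $G_{i+1}$.

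The main obstacle is the construction of $\tilde\omega$ from the recorded reduction-to-zero, and in particular the verification that $\lm(\tilde\omega)=t_\omega e_{\ell+1}$ (and not something strictly larger that would then prevent the divisibility argument from firing). This is where sig-safeness is essential: each reduction step $r_{i-1}\mapsto r_{i-1}-c_i t_i g_{j_i}$ contributes a summand whose signature is $\siglt$ than $\sig(r_{i-1})$, so the maximal signature in $\tilde\omega$ remains the signature of the original s-polynomial, namely $t_\omega e_{\ell+1}$. After this, the rest of the argument is a routine bookkeeping of signatures identical to that in the proof of Lemma~\ref{lem:criteria}.
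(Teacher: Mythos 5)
The paper gives no in-text proof of this lemma; it defers entirely to Proposition~16 and Lemma~17 of \cite{epSig2011}, so there is no argument in this paper to compare yours against. Your self-contained reconstruction follows the expected route and is essentially sound: Case~1 is a direct instance of Lemma~\ref{lem:criteria}(\ref{nmcriterion}) applied to the regenerated generating set $B_i\cup\{f_{i+1}\}$, and Case~2 -- the new content -- produces an actual syzygy $\tilde\omega\in\syz\left(B_i\cup\{f_{i+1}\}\right)$ with $\lm(\tilde\omega)=t_\omega e_{\ell+1}$ by unwinding the recorded sig-safe reduction-to-zero, after which you re-run the Case~1 argument. Identifying sig-safeness as the mechanism that caps the module terms contributed by the reducers is exactly the right observation.

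One point is worth making explicit, since it is precisely where the statement could break. You argue that no term in $\tilde\omega$ exceeds $t_\omega e_{\ell+1}$, but not that $t_\omega e_{\ell+1}$ actually survives as the leading term. If the top module terms $u_p\sig(p)$ and $u_q\sig(q)$ of the two summands of $\spol{p}{q}$ coincided and their coefficients cancelled, then $\lm(\tilde\omega)$ would be strictly smaller -- possibly even of lower index -- and $t_\omega e_{\ell+1}$ would not be the leading term of any syzygy, so the hypothesis $t_\omega\mid u_h t_h$ would not produce a usable $\tilde\omega$. This is excluded by the standing invariant that the algorithm only forms and reduces regular s-polynomials (those with $u_p\sig(p)\neq u_q\sig(q)$), which guarantees that the label stored in $S_{\ell+1}$ is the exact leading module term of the s-polynomial's representation; you should say so explicitly, since this is exactly what licenses recording a signature of a zero reduction as a syzygy leading term. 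With that sentence added, the argument is complete.
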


\begin{proof}
See Proposition~16 and Lemma~17 in~\cite{epSig2011}.
\end{proof}

\begin{rem}\
\begin{enumerate}
\item Switching from $\psyz(F_{i+1})$ to $\psyz\left(B_{i} \cup \{f_{i+1}\}\right)$
is not a problem at all since $\langle F_{i+1}\rangle = \langle B_{i} \cup
\{f_{i+1}\}\rangle$ as $B_{i}$ is the reduced Gr\"obner basis of $I_{i}$.
\item Note that the restriction to check elements of current index only is influenced
by the discussion in Section~\ref{sec:ffc}. Whereas we know that
Corollary~\ref{cor:ffc} ensures that $\ffc$ does not lose any useful information
for rejecting useless critical pairs due to its aggressive implementation of (RW), 
this does not hold for $\ggv$.
So for $\ggv$ it is possible that removing the signatures of the intermediate
Gr\"obner bases can lead to the situation that less critical pairs are rendered
useless by (NM).
\end{enumerate}
\end{rem}

Clearly, one can easily use (NM) as given in Lemma~\ref{lem:ggv} in $\ffc$
instead of the plain one stated in Lemma~\ref{lem:criteria}, its correctness
does not depend on the implementation of (RW).

\begin{defi}
We denote the algorithm $\ffc$ with (NM) implemented as in Lemma~\ref{lem:ggv}
by $\ffe$.\footnote{The ``A'' stands for ``actively using zero reductions''.}
\end{defi}

The question that comes to one's mind is the following: Why would it be of 
any benefit to switch from $\ffc$ to $\ffe$? Whereas some of the signatures 
of zero reductions are used in $\ffc$ in (RW), not all of them can be used
in each situation due to the restriction that $\sig(r) \succ \sig(h)$. It is 
already mentioned in~\cite{epSig2011} that 
$\ffe$ is way faster than $\ffc$ for non-regular input: $\ffc$ computes 
way more zero reductions than $\ffe$, but cannot use it actively. Moreover, 
even if a corresponding (RW) detection happens in $\ffc$, testing by (NM) in
$\ffe$ is a lot faster as already discussed in Remark~\ref{rem:nmrw}. We see
in Section~\ref{sec:exp} that this has a huge impact not only on the timings, 
but also on the number of reduction steps taking place, due to the fact that
(NM) and (RW) are used in $\ffc$ resp. $\ffe$ on possible reducers, too.

\section{Combining ideas}
\label{sec:idea}
So what is our contribution in this paper besides giving an overview of already known
optimizations? Until now, the presented ideas
of interreducing intermediate Gr\"obner bases and using zero reductions actively
in (NM) are used without any direct connection:
\begin{enumerate}
\item The Gr\"obner bases are interreduced {\it between two iteration steps}.
This has an effect on the labeled polynomials computed in the previous iteration
steps.
\item Zero reductions are used actively {\it in a single iteration step only}.
This has an impact on current index labeled polynomials only.
\end{enumerate}

Of course, interreducing the intermediate Gr\"obner basis $\poly(G_i)$ to $B_i$has an influence on
the upcoming iteration step inasmuch as less critical pairs are considered and
reductions w.r.t. $B_i$ are more efficient. Besides this we cannot assume
to receive any deeper impact on the $(i+1)$st iteration step. On the other hand,
   it would be quite nice to use (NM) not only on current index labeled
   polynomials, but also on those coming from $B_i$. For this one could just
precompute $\psyz(B_i)$ and check the corresponding lower index generators of
critical pairs using $\psyz(B_i)$ in (NM). By Lemma~\ref{lem:criteria} this
would be a correct optimization. The crucial point is that we can do even
better:

At the moment we interreduce the intermediate Gr\"obner basis $\poly(G_i)$ of $I_i$
let us try to get some more resp. better signatures for checking (NM): We have
already seen in Section~\ref{sec:ffc} how to recompute some signatures for
checking lower index labeled polynomials in the upcoming iteration step: We
just compute and keep the signatures of the corresponding s-polynomials 
$\spol{g_j}{g_k}$ for $\poly(g_j),\poly(g_k) \in B_{i}$. There we could reject
their computations as we have proven that $\ffc$ would discard any s-polynomial
detected with those signatures by (RW) anyway. Moreover, we know that all those signatures 
correspond to s-polynomials that already reduce to zero w.r.t. $B_{i}$. Thus, with
the idea of Section~\ref{sec:ggv} in mind, we can actively use those theoretical zero
reductions actively for lower index labeled polynomials.

\begin{defi}
Assume that $\poly(G_{i})$ is reduced to 
$B_{i}=\left\{b_1,\dotsc,b_{\ell_i}\right\}$
after the $i$th iteration step of $\siggb$.
Then we define
\[S_i := \left\{\frac{\tau(b_{\ell_i},b_k)}{\lm(b_{\ell_i})} e_{\ell_i} \mid 
1\leq k < \ell_i
\right\}.\]
\end{defi}

\begin{thm}[Strengthening (NM)]
\label{lem:nmopt}
Assuming the $(i+1)$st incremental step of a signature-based algorithm computing 
a Gr\"obner basis $\poly(G_{i+1})$ for $I_{i+1}$, let $\ell+1$ be the current index of
labeled polynomials. $\spol{f}{g}$ has a standard
representation w.r.t. $G_{i+1}$ if there exists an element
\[\omega \in \psyz\left(B_{i} \cup \{f_{i+1}\}\right) \cup S_{i+1} \cup S_{i} 
\cup \dotsc \cup
S_{2}\]
with $\lm(\omega) = t_{\omega}e_{j_h}$ such that $t_{\omega} \mid u_h t_h$,
where $u_h \sig(h) = u_h t_h e_{j_h}$ for either $h=f$ or $h=g$. 
\end{thm}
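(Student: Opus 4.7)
The plan is to reduce the new criterion to the already established versions by showing that every element of $S_j$ (for $j \leq i$) is the leading monomial of a genuine syzygy of $B_i \cup \{f_{i+1}\}$, so the argument behind Lemma~\ref{lem:ggv} applies verbatim. I would first split on the origin of $\omega$: the case $\omega \in \psyz(B_i \cup \{f_{i+1}\})$ is Lemma~\ref{lem:criteria}(1) applied to the generating set $B_i \cup \{f_{i+1}\}$ (which generates $I_{i+1}$), and the case $\omega \in S_{i+1}$ is Lemma~\ref{lem:ggv}. The only new content is the case $\omega \in S_j$ with $2 \leq j \leq i$.

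Fix such a $j$ and write $\omega = \frac{\tau(b_{\ell_j},b_k)}{\lm(b_{\ell_j})} e_{\ell_j}$ for some $k < \ell_j$. First I would observe that $\omega$ is precisely the leading term of the signature of $\spol{b_{\ell_j}}{b_k}$ in the labeling of iteration step $j$: the two candidate terms $\frac{\tau}{\lm(b_{\ell_j})} e_{\ell_j}$ and $\frac{\tau}{\lm(b_k)} e_k$ are compared by the index-first ordering $\prec$, and $\ell_j > k$ forces the former to dominate. Because $B_j$ is a \emph{reduced} Gr\"obner basis of $I_j$, this s-polynomial sig-safely reduces to zero modulo $B_j$, which yields a genuine syzygy $\omega^\ast \in \syz(B_j)$ with $\lm(\omega^\ast) = \omega$.

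Next I would transport $\omega^\ast$ into the module seen at iteration step $i+1$. Under the convention that labels attached to basis elements are preserved across iterations---elements surviving from $B_j$ into $B_i$ keep their indices, while later elements are appended with higher indices---the syzygy embeds into $\syz(B_i \cup \{f_{i+1}\})$ by zero-padding the new coordinates, and both its index $\ell_j$ and its leading monomial are unchanged. Having $\omega^\ast$ as an honest syzygy of the current module is exactly the hypothesis required by the proof technique of Lemma~\ref{lem:criteria}(1)/Lemma~\ref{lem:ggv}: multiply $h$ by $u_h$, subtract a suitable multiple of $\omega^\ast$ (possible since $t_\omega \mid u_h t_h$) to strictly decrease the signature, and iterate using the well-ordering of $\prec$ to obtain a standard representation of $\spol{f}{g}$.

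The main obstacle will be the label bookkeeping across iterations. The signature $t_\omega e_{\ell_j} \in S_j$ is recorded at the end of step $j$, and for the theorem to be meaningful the index $\ell_j$ must still point to $b_{\ell_j}$ when step $i+1$ runs; once this consistency is justified, the sig-safe reduction of $\spol{b_{\ell_j}}{b_k}$ modulo $B_j$ automatically remains sig-safe modulo $B_i \cup \{f_{i+1}\}$, and the rest of the argument is routine. A secondary subtlety is to justify that the current-index restriction imposed in Lemma~\ref{lem:ggv} can be dropped here: once $\omega^\ast$ is exhibited as an actual element of $\syz(B_i \cup \{f_{i+1}\})$, there is no longer any reason to restrict the index of $h$, and the conclusion follows exactly as in Lemma~\ref{lem:criteria}(1).
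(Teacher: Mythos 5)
Your proof follows the same outline as the paper's: case-split on the origin of $\omega$, dispatch $\psyz(B_i\cup\{f_{i+1}\})$ and $S_{i+1}$ to Lemma~\ref{lem:criteria}(1) and Lemma~\ref{lem:ggv}, and for $\omega\in S_j$ argue that $\omega$ is the leading term of an honest syzygy. The correctness of dropping the current-index restriction once $\omega^\ast$ is a genuine module syzygy is also right. But the crucial claim that $\spol{b_{\ell_j}}{b_k}$ reduces \emph{sig-safely} to zero modulo $B_j$ is asserted, not proved. Being a reduced Gr\"obner basis only gives reduction to zero; sig-safeness is an extra property, and it holds only because $\ell_j$ is the \emph{maximal} index occurring in $B_j$: a reducer $b_m$ with $m<\ell_j$ has $t'e_m\prec t_\omega e_{\ell_j}$ for free by the index-first ordering, and a reducer equal to $b_{\ell_j}$ has multiplier $t'<t_\omega$ because $t'\lm(b_{\ell_j})$ equals the current (strictly decreasing) leading monomial, which sits strictly below $\tau(b_{\ell_j},b_k)$. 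This maximality argument is the whole reason $S_j$ is defined only with $b_{\ell_j}$ and not with arbitrary pairs $\spol{b_m}{b_k}$; the paper's remark after the theorem flags exactly this (``we cannot ensure that $\spol{g_j}{g_k}$ reduces sig-safe to zero \ldots if $j\ne\ell_i\ne k$''), and it was the source of an error in an earlier draft. Your proof would fail silently on the general pair, so the maximality step must be made explicit.

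A secondary weakness is the transport step. The ``convention'' that elements surviving from $B_j$ into $B_i$ keep their indices is not generally satisfiable: reduced Gr\"obner bases of nested ideals need not nest, so elements of $B_j$ can simply vanish from $B_i$ and your zero-padding embedding of $\syz(B_j)$ into $\syz(B_i\cup\{f_{i+1}\})$ does not exist as stated. The paper sidesteps this by recomputing the rule sets against the current interreduced basis (see Example~\ref{ex:problem}, where $S_2$ is ``removed completely'' and rebuilt when $G_2\ne B_2$, and the remark that there is no connection between $S_i$ and $S_{i-1}$), and its own proof is admittedly terse on this point, only claiming the sequence ``can be ordered'' so the labels line up. You are right that label bookkeeping is the main obstacle; the fix is to tie $S_j$ to the labeling actually current at step $i+1$ rather than to a hypothetical embedding of the old basis.
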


\begin{proof}
Clearly, the elements $\omega$ in $\psyz\left(B_{i} \cup \{f_{i+1}\}\right)$ can be divided
in two types:
\begin{enumerate}
\item $\lm(\omega) = t e_{\ell+1}$, and
\item $\lm(\omega) = t e_j$ for $2 \leq j < \ell+1$.
\end{enumerate}
Those elements of Type (1) from $\psyz\left(B_{i} \cup \{f_{i+1}\}\right)$
together with those of $S_{i+1}$ (see Definition~\ref{def:ggvcrit}) are of
the type $t e_{\ell+1}$, and thus do only detect generators of the s-polynomial 
in
question which are computed during the current iteration step. The correctness
of this statement follows from Lemma~\ref{lem:ggv}.

Next we consider $S_j$ for $2 \leq j \leq i$: Note that each element in
such an $S_j$ is of type $t e_{\ell_j}$, that means those elements can only 
detect
generators of the s-polynomial in question of index $\ell_j$. Thus it is enough 
to
consider one such $S_j$ for a fixed $j$. Let $te_{\ell_j} \in S_j$, then $t =
\frac{\tau(b_{\ell_j},b_k)}{\lm(b_{\ell_j})}$ for some $1\leq k < \ell_j$. Since 
$B_j$ with $\# B_j = \ell_j$ is a reduced
Gr\"obner basis we know that $\spol{b_{\ell_j}}{b_k}$ reduces to zero
w.r.t. $B_j$. Moreover, note that due to the incremental structure of $\siggb$ 
and the
  interreduction of $\poly(G_j)$ between each two iteration steps the sequence
$(b_1,\dotsc,b_{\ell_j})$ of elements of $B_{j}$ can be ordered in such a
    way that this zero reduction corresponds to a syzygy $\omega$ with
    $\lm(\omega) = t e_{\ell_j}$. Any other possible syzygy detecting a 
    generator of index $\ell_j$ is of Type (2) from $\psyz\left(B_{i} \cup 
        \{f_{i+1}\}\right)$.  By
Lemma~\ref{lem:criteria}~(\ref{nmcriterion}) the
statement holds. 
\end{proof}

\begin{rem} \
\begin{enumerate}
\item Note that the sets $S_i$ in Theorem~\ref{lem:nmopt} are computed 
recursively after the $i$th iteration step of $\siggb$. There is no connection 
between $S_i$ and $S_{i-1}$ for any $i>3$. It is also crucial to only take the 
element of highest index $\ell_i$ from $B_i$ into account when computing $S_i$, 
        we cannot ensure that $\spol{g_j}{g_k}$ reduces sig-safe to zero w.r.t.  
        $G_i$ if $j \neq \ell_i \neq k$. There a not sig-safe reduction could be 
        possible to achieve the zero reduction of $\spol{g_j}{g_k}$, a problem 
        that cannot happen if either $j$ or $k$ is equal to $\ell_i$.  \item The 
        main optimization compared to $\ffc$ is to use $S_{i}$ up to $S_2$ not 
        in (RW), as it is described in Section~\ref{sec:ffc}, but in (NM). 
        Compared to $\ggv$'s variant of (NM) lots of new checks are added that 
        detect way more useless critical pairs as we see in 
        Section~\ref{sec:exp}.

Note that this problem has not been taken into account in \cite{epF5C2009} where 
such signatures are suggested to be used in (RW).
\end{enumerate}
\end{rem}

One could think that the presented strengthening of (NM) is rather equivalent to
the initial presentation in Lemma~\ref{lem:criteria}, but this is not the fact.
The variant presented here is way more aggressive in finding useless critical
pairs.

\begin{cor}
In Theorem~\ref{lem:nmopt} elements of type $\lm(s) = te_j \in \psyz\left(B_{i}
    \cup \{f_{i+1}\}\right)$ where $j = \# B_\ell$ for some $2\leq \ell \leq i$ 
is need not be
considered at all.
\end{cor}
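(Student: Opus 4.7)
The plan is to show, for each principal syzygy covered by the hypothesis, that some element of an appropriate $S_\ell$ divides its leading monomial in the signature-module sense, so by Theorem~\ref{lem:nmopt} the syzygy is redundant and can be dropped from the principal-syzygy set without weakening (NM).

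First, I would dissect the shape of a principal syzygy with leading-monomial index $j$. Since the extended ordering satisfies $t e_a \siglt s e_b$ whenever $a<b$, a principal syzygy $b_k e_j - b_j e_k$ with $k \neq j$ has leading monomial $\lm(b_{\min(j,k)})\, e_{\max(j,k)}$. Thus any $s \in \psyz\bigl(B_i \cup \{f_{i+1}\}\bigr)$ with $\lm(s) = t e_j$ must come from a pair $(b_k, b_j)$ with $k < j$, and then $t = \lm(b_k)$.

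Second, I would invoke the hypothesis $j = \#B_\ell = \ell_\ell$ together with the bookkeeping already used in the proof of Theorem~\ref{lem:nmopt}: interreducing between iterations orders the elements so that the labeled polynomial carrying index $\ell_\ell$ in the current iteration $i+1$ agrees in leading monomial with the top element of $B_\ell$ used to build $S_\ell$. Under this identification, the signature
\[ \sigma \;=\; \frac{\tau(b_j, b_k)}{\lm(b_j)}\, e_j \;=\; \frac{\lm(b_k)}{\gcd(\lm(b_j), \lm(b_k))}\, e_j \]
belongs to $S_\ell$, and its monomial part divides $\lm(b_k) = t$, so $\sigma$ divides $\lm(s)$. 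Hence $\sigma$ can play the role of $\omega$ in Theorem~\ref{lem:nmopt}, and the principal syzygy $s$ is superfluous in the union $\psyz\bigl(B_i \cup \{f_{i+1}\}\bigr) \cup S_{i+1} \cup \dotsb \cup S_2$.

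The hardest step is the index-matching in paragraph two: one must argue cleanly that after the interreductions and relabelings performed by $\siggb$ between iterations, the index $\ell_\ell$ in iteration $i+1$ really targets the same leading monomial that played the role of ``top element'' when $S_\ell$ was built. Once this correspondence is nailed down, the remainder reduces to the elementary divisibility $\frac{\lm(b_k)}{\gcd(\lm(b_j), \lm(b_k))} \mid \lm(b_k)$, which is immediate.
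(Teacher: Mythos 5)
Your proof follows essentially the same route as the paper's own: decompose $s = b_k e_j - b_j e_k$ with $k < j$, observe $t = \lm(b_k)$, locate $\frac{\tau(b_j,b_k)}{\lm(b_j)}\,e_j$ in $S_\ell$, and finish with the elementary divisibility $\frac{\tau(b_j,b_k)}{\lm(b_j)} = \frac{\lm(b_k)}{\gcd(\lm(b_j),\lm(b_k))} \mid \lm(b_k)$. The index-matching issue you flag in your second paragraph is a legitimate concern, but the paper's proof of the corollary does not address it explicitly either; it relies on the implicit re-indexing convention already set up in the proof of Theorem~\ref{lem:nmopt} and in Section~\ref{sec:ffc}.
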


\begin{proof}
Assume such an $s \in \psyz\left(B_{i} \cup \{f_{i+1}\}\right)$ with $\lm(s) = t
e_j$, $j = \# B_\ell$ for some fixed $\ell$. Then $s= b_k e_j - b_j e_k$ for 
some $k<j$. Let $k$ be fixed. In $S_\ell$ there exists some $u e_j$ with $u =
\frac{\tau(b_j,b_k)}{\lm(b_j)}$. It follows that $u \mid t$.
\end{proof}

The idea is now to replace the implementations of (NM) in $\ffc$, $\ffe$, and $\ggv$
, respectively, by the version given in Theorem~\ref{lem:nmopt}.

\begin{defi}
We denote the algorithms $\ffc$, $\ffe$, and $\ggv$ with (NM) implemented as in
Theorem~\ref{lem:nmopt} by $\ffcc$, $\ffec$, and $\ggvc$,
  respectively.\footnote{The ``i'' stands for ``intermediate incremental 
    optimization''.}
\end{defi}

Algorithm~\ref{alg:ffca} illustrates the main wrapper for an incremental
signature-based Gr\"obner basis algorithm based on the idea presented in this
section.

\begin{algorithm}[h]
\caption{\label{alg:ffca}$\siggb$ with reduced intermediate Gr\"obner bases and optimized (NM)
  Criterion}
\begin{algorithmic}[1]

\REQUIRE{$F_m$}
\ENSURE{$G$, a Gr\"obner basis for $I_m$}
\STATE{\textcolor{newcolor}{$S \gets []$}, $G_1 \gets \{f_1\}$}
\FOR{$(i=2,\dotsc,m)$}
\STATE{\textcolor{newcolor}{$B_{i-1} \gets 
  \redsb\left(G_{i-1}\right)$}}\label{ffca:red}
\STATE{$f_i \gets \reduce(f_i,B_{i-1})$}\label{ffca:reduce}
\IF{$(f_i \neq 0)$}
\STATE{\textcolor{newcolor}{$j \gets \# B_{i-1}$}}\label{ffca:start}
\FOR{\textcolor{newcolor}{$\left(k=1,\dotsc,j-1\right)$}}
\STATE{\textcolor{newcolor}{$S_{i-1,k} \gets 
  \frac{\tau(b_j,b_k)}{\lm(b_j)}e_j$}}\label{ffca:stop}
\ENDFOR
\STATE{$G_i \gets \incsigc(f_i,B_{i-1})$}
\ELSE
\STATE{$G_i \gets G_{i-1}$}
\ENDIF
\ENDFOR
\RETURN{$G_m$}
\end{algorithmic}
\end{algorithm}

\begin{example}
\label{ex:problem}
Let $p_1 = yz+2$, $p_2 = xy+\frac 1 3 xz+\frac 2 3$, $p_3 = xz^2-6x+2z$ be three 
polynomials in $\mathbb{Q}[x,y,z]$. We want to compute a Gr\"obner basis for the 
ideal $I=\langle p_1, p_2, p_3\rangle$ w.r.t. the degree reverse lexicographical 
ordering with $x > y > z$ using Algorithm~\ref{alg:ffca} using $\siglt$ as 
ordering on the signatures.

Due to the incremental structure of the algorithm we start with the computation 
of a Gr\"obner basis $G_2$ of $\langle p_1,p_2\rangle$.
After initializing $f_1 := (e_1,p_1)$ and $f_2:=(e_2,p_2)$ we construct
the s-polynomial of $f_1$ and $f_2$ is
\[\spol {f_2}{f_1} = (ze_2, zp_2 - xp_1),\]
which does not have a standard representation w.r.t. $G_2$ at the moment of its 
creation. Nevertheless, a new element is added to $G_2$, namely
\[f_3 := \left(ze_2,\frac 1 3 xz^2-2x+\frac 2 3 z\right).\] 

Now we look at the s-polynomials
\begin{align*}
\spol{f_3}{f_1} &= \left(yz e_2,3y \poly(f_3) - xz \poly(f_1)\right),\\
\spol{f_3}{f_2} &= \left(yz e_2,3y \poly(f_3) - z^2 \poly(f_2)\right).
\end{align*}

It does not make any difference which of these two s-polynomials we compute: 
$\ff$ would remove the later one by its implementation of (RW), whereas $\ggv$ 
would only store one of the two corresponding critical pairs in the beginning.  
W.l.o.g. we assume the reduction of $\spol{f_3}{f_1}$, the situation for 
considering $\spol{f_3}{f_2}$ is similar and behaves the very same way:
\begin{align*}
\spol{f_3}{f_1} &= \left(yz e_2, -6xy-2xz+2yz\right)
\end{align*}
Further sig-safe reductions with $6f_2$ and $2f_1$ lead to a zero reduction, 
        i.e. we can add a new rule, namely $yz e_2$, to the set $S_2$ as 
        explained in Section~\ref{sec:ggv}.\footnote{Latest at this point 
          $\spol{f_3}{f_2}$ would be removed by the rule $yz e_2 \in S_2$.} 
        
Since there is no further s-polynomial left, $\siggb$ finishes this iteration 
step with
\[G_2 = \left\{ yz+2, xy+\frac 1 3 xz + \frac 2 3, \frac 1 3 xz^2-2x+\frac 2 3 
z\right\}.\]
We see that $G_2$ is already the reduced Gr\"obner basis $B_2$ of $\langle 
p_1,p_2\rangle$, so Line~\ref{ffca:red} of $\siggb$ does not change anything.
Since $G_i$ and $B_i$ do not coincide inbetween most iteration steps, we need to 
remove $S_2$ completely, since rules stored in there might not be correct any 
more.

Let us have a closer look at how the new rules would be computed: First of all, 
    any element in $G_2$ corresponds to a module generator of $\rp^3$, that 
    means we can think of three labeled polynomials $f_i:=(e_i,b_i)$ for $i\in 
    \{1,2,3\}$.
During the lines~\ref{ffca:start} --~\ref{ffca:stop} the signatures 
corresponding to $\spol{f_3}{f_1}$ and $\spol{f_3}{f_2}$ are added to $S_2$ 
respectively:
The first one gives the rule $ye_3$, the second one also; thus we have $S_2 = 
\{ye_3\}$.

Next we see that $p_3$ reduces to zero w.r.t. $B_2$ in Line~\ref{ffca:reduce}.  

Thus the algorithm does not enter another iteration step,
  but terminates with the correct Gr\"obner basis $G = B_2$.
\end{example}

\begin{rem} \
\begin{enumerate}
\item Of course as stated in the pseudo code, $\siggb$ would reduce the next 
generator $p_3$ of the input ideal w.r.t. intermediate reduced Gr\"obner basis 
$B_2$, {\em before} it would recompute the syzygy list $S_2$ in the above 
example. For the sake of explaining how the recomputation of such a rules list 
works, using a rather small example, we choose a complete discussion over 
efficiency in Example~\ref{ex:problem}
\item Note that it would be wrong to also add the signature of $\spol{f_2}{f_1}$ 
to $S_2$ once we have interreduced $G_2$ to $B_2$. Of course, $\spol{b_2}{b_1}$ 
has a standard representation w.r.t.  $B_2$, namely
\[\spol{b_2}{b_1} = b_3,\] but this does not lead to a standard representation 
of $\spol{f_2}{f_1}$ due to the fact that $\sig(f_3)= e_3 \succ 
\sig\left(\spol{f_2}{f_1}\right)$. That is the one big drawback of interreducing 
the intermediate Gr\"obner bases in incremental signature-based algorithms.  
Nevertheless, the speed up due to handling way less elements in $B_i$ compared 
to $G_i$ more than compensates this as shown in \cite{epF5C2009}.
\end{enumerate}
\end{rem}

\section{Experimental results}
\label{sec:exp}
We compare timings, the number of zero reductions, and the number of overall
reduction steps of the different algorithms presented in this paper.
To give a faithful comparison, we use a further developed version of the
implementation we have done for~\cite{epSig2011}: This is an implementation of a generic
signature-based Gr\"obner basis algorithm in the kernel of a developer version of \textsc{Singular}
3-1-4. Based on this version we implemented $\ggv$, $\ggvc$, $\ffc$, $\ffcc$,
$\ffe$, and $\ffec$ by plugging in the different variants and usages of the
criteria (NM) and (RW). There are no optimizations which
could prefer any of the specific algorithms, so that the difference in the 
implementation between two of the above mentioned algorithms is not more than 
300 lines of code; compared to approximately 3,500 lines of code overall this is 
    negligible. All share the very same data structures and use the very same 
    (sig-safe) reduction routines. So the differences shown in 
    Tables~\ref{tab:ch5:time},~\ref{tab:ch5:totalRed}, and~\ref{tab:ch5:zero} 
    really come from the various optimizations of the criteria mentioned in 
    Sections~\ref{sec:ffc} --~\ref{sec:idea}.

Of course, to ensure such an accurate comparison of various different variants 
of signature-based algorithms has a drawback in the overall performance of the 
algorithms. Since we are interested in impact of the improvements explained in 
this paper it is justified to take such an approach. Clearly, implementing a 
highly optimized $\ffec$ whithout any restrictions due to sharing data 
structures and procedures with an $\ggv$ can lead to a way better performance.  
It is not in focus of this paper to present the fastest implementation of such 
kind of algorithms, but to present practical benefits of the presented 
optimizations, focusing on the fact that all variants of incremental 
signature-based Gr\"obner basis algorithms take an advantage out of them.

The source code is publicly available in the branch {\tt f5cc}\footnote{The 
  results presented here are done with the commit key {\tt 
    5c4dc1134a4ab630faab994dbe93d3013b4ccc7e}.} at

\begin{center}
\url{https://github.com/ederc/Sources}.
\end{center}
We computed the examples on a computer with the following specifications: 
\begin{itemize}
\item 2.6.31--gentoo--r6 GNU/Linux 64--bit operating system, 
\item INTEL\textregistered~XEON\textregistered~X5460~@~3.16GHz processor, 
\item 64 GB of RAM, and 
\item 120 GB of swap space.
\end{itemize}

Due to the fact that we are comparing 6 algorithms we colorized the results
presented in the respective tables for better readability, see Figure~\ref{fig:colortables}.

\begin{figure}
\begin{center}
\includegraphics[width=0.8\textwidth]{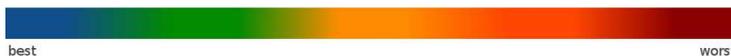}
\end{center}
\caption{\label{fig:colortables}Meanings of the colors in the respective tables}
\end{figure}

Our naming convention for examples specifies that ``{\tt -h}'' denotes
the homogenized variant of the corresponding benchmark. All examples are
computed over a field of characteristic $32,003$ w.r.t. the degree reverse
lexicographical ordering.

First of all let us have a closer look at $\ggv$, $\ffc$, and $\ffe$. In
Table~\ref{tab:ch5:time}, we see
that whereas $\ffc$ is faster than $\ggv$ in nearly all example sets, the ones
which lead to a high number of zero reductions in $\ffc$ are computed way faster
by $\ggv$. This is based on the fact that $\ggv$ actively uses such zero
reductions with adding new checks for (NM), whereas $\ffc$ only partially
includes those signatures in its implementation of (RW). 

Comparing $\ffe$ and
$\ffc$ we see that $\ffe$ is not only way faster than $\ffc$ in such highly
non-regular examples like {\tt Eco-X}\footnote{Note that $\ffc$ and $\ffcc$ cannot compute
  {\tt Eco-11-h}.}, but also that $\ffe$ is faster in other
systems like {\tt Cyclic-8}. Note that $\ffe$ is in all examples faster and 
computes less reductions than $\ggv$. 

The ideas of Section~\ref{sec:idea} help $\ggvc$ to compute much less
reduction steps and discard way more useless critical pairs than $\ggv$ does.  
This comes from the fact that $\ggv$ does not implement any rewritable criterion 
besides its choice of keeping only 1 critical pair per signature. In most 
examples $\ggvc$ executes only half as much reduction steps as $\ggv$, in some 
examples like {\tt F-855} it even alleviates to $15\%$. 

By our discussion in Section~\ref{sec:ffc} it is not a surprise that there is no 
change in the corresponding numbers of reduction steps and zero reductions for 
$\ffcc$ resp. $\ffec$ compared to the ones of $\ffc$ and $\ffe$.  Still the 
timings improve greatly which is based on the following facts:
\begin{enumerate}
\item Altough the rules added to $S_i$ in Lines~\ref{ffca:start} 
--~\ref{ffca:stop} are also checked by $\ff$'s (RW) implementation, checking 
(RW) costs more time than checking (NM) due to checking if the one is still in 
the area of correct (RW) rules.
\item To have all possible (RW) rules available, (RW) must be checked directly 
before the reduction step of the corresponding critical pair starts. At this 
point the critical pair can be stored for a long time, using memory and making 
the list of critical pairs longer. In $\ffcc$ resp. $\ffec$ useless critical 
pairs can be found directly when the algorithm tries to create them. Thus they 
are not kept for a long time, keeping lists shorter, which does not only save 
memory, but also speeds up insertion of other critical pairs to the list.
\end{enumerate}
All three variants implementing the idea of Section~\ref{sec:idea} are, often 
multple times, faster in all examples, besides {\tt Katsura-X}; for them we 
already know that the usual implementation of (NM), that means considering the 
principal syzygies, is already optimal. Thus all the ideas presented in this 
paper only add some small overhead in the computations of {\tt Katsura-X} which 
does not affect the performance in a beneficial way. 

\begin{center}
\begin{table}
\begin{centering}
  	\begin{tabular}{|c|D{.}{.}{3}|D{.}{.}{3}|D{.}{.}{3}|D{.}{.}{3}|D{.}{.}{3}|D{.}{.}{3}|}
		\hline
  		Test case & \multicolumn{1}{c|}{$\ggv$} & \multicolumn{1}{c|}{$\ggvc$}   & \multicolumn{1}{c|}{$\ffc$} & \multicolumn{1}{c|}{$\ffcc$} &   \multicolumn{1}{c|}{$\ffe$}   & \multicolumn{1}{c|}{$\ffec$}\\ 
		\hline
		\hline
    {\tt Cyclic-7-h} & {\color{6}25}.{\color{6}714} & 
{\color{5}13}.{\color{5}441} & {\color{3}5}.{\color{3}369} & 
{\color{2}5}.{\color{2}255} & {\color{4}5}.{\color{4}790} & 
{\color{1}5}.{\color{1}180}\\ \hline
    {\tt Cyclic-7} & {\color{6}24}.{\color{6}613} & {\color{5}12}.{\color{5}782} 
    & {\color{4}5}.{\color{4}660} & {\color{3}5}.{\color{3}310} & 
{\color{2}5}.{\color{2}216} & {\color{1}4}.{\color{1}758}\\ \hline
    {\tt Cyclic-8-h} & {\color{6}13,273}.{\color{6}487} & 
{\color{4}5,710}.{\color{4}221} & {\color{5}6,881}.{\color{5}895} & 
{\color{2}3,689}.{\color{2}003} & {\color{3}4,970}.{\color{3}714} & 
{\color{1}1,907}.{\color{1}076}\\ \hline
    {\tt Cyclic-8} & {\color{6}12,386}.{\color{6}925} & 
{\color{4}5,362}.{\color{4}307} & {\color{5}6,606}.{\color{5}116} & 
{\color{2}3,482}.{\color{2}177} & {\color{3}4,814}.{\color{3}173} & 
{\color{1}1,812}.{\color{1}321}\\ \hline
    {\tt Eco-9-h} & {\color{4}10}.{\color{4}050} & {\color{3}6}.{\color{3}741} & 
{\color{5}58}.{\color{5}935} & {\color{6}59}.{\color{6}182} & 
{\color{2}6}.{\color{2}320} & {\color{1}5}.{\color{1}550}\\ \hline
    {\tt Eco-9} & {\color{6}18}.{\color{6}755} & {\color{1}10}.{\color{1}012} & 
{\color{2}12}.{\color{2}899} & {\color{4}12}.{\color{4}994} & 
{\color{5}13}.{\color{5}425} & {\color{2}12}.{\color{2}899}\\ \hline
    {\tt Eco-10-h} & {\color{4}278}.{\color{4}663} & 
{\color{2}158}.{\color{2}167} & {\color{6}2,472}.{\color{6}763} & 
{\color{5}1,974}.{\color{5}163} & {\color{3}188}.{\color{3}683} & 
{\color{1}133}.{\color{1}183}\\ \hline
    {\tt Eco-10} & {\color{6}1,041}.{\color{6}896} & 
{\color{3}497}.{\color{3}025} & {\color{1}491}.{\color{1}914} & 
{\color{5}518}.{\color{5}346} & {\color{2}492}.{\color{2}345} & 
{\color{4}497}.{\color{4}811}\\ \hline
    {\tt Eco-11-h} & {\color{4}10,169}.{\color{4}897} &
{\color{2}5,136}.{\color{2}465} & {\color{5}-} & {\color{5}-} & 
{\color{3}7,893}.{\color{3}788} & {\color{1}4,815}.{\color{1}971}\\ \hline
    {\tt F-744-h} & {\color{4}33}.{\color{4}244} & {\color{3}25}.{\color{3}724} 
    & {\color{5}35}.{\color{5}324} & {\color{6}36}.{\color{6}740} & 
{\color{1}19}.{\color{1}865} & {\color{1}19}.{\color{1}865}\\ \hline
    {\tt F-744} & {\color{6}27}.{\color{6}312} & {\color{5}20}.{\color{5}469} & 
{\color{1}8}.{\color{1}198} & {\color{4}9}.{\color{4}267} & 
{\color{3}8}.{\color{3}458} & {\color{1}8}.{\color{1}198}\\ \hline
    {\tt F-855-h} & {\color{4}1,246}.{\color{4}744} & 
{\color{1}290}.{\color{1}856} & {\color{6}2,948}.{\color{6}138} & 
{\color{5}2,381}.{\color{5}813} & {\color{3}600}.{\color{3}001} & 
{\color{2}422}.{\color{2}219}\\ \hline
    {\tt F-855} & {\color{6}971}.{\color{6}491} & {\color{5}131}.{\color{5}134} 
    & {\color{2}83}.{\color{2}185} & {\color{4}86}.{\color{4}343} & 
{\color{3}86}.{\color{3}558} & {\color{1}84}.{\color{1}020}\\ \hline
    {\tt Katsura-10-h} & {\color{1}4}.{\color{1}186} & 
{\color{2}4}.{\color{2}193} & {\color{3}4}.{\color{3}213} & 
{\color{6}4}.{\color{6}491} & {\color{4}4}.{\color{4}248} & 
{\color{5}4}.{\color{5}256}\\ \hline
    {\tt Katsura-10} & {\color{1}4}.{\color{1}150} & {\color{2}4}.{\color{2}183} 
    & {\color{3}4}.{\color{3}187} & {\color{6}4}.{\color{6}217} & 
{\color{5}4}.{\color{5}227} & {\color{4}4}.{\color{4}192}\\ \hline
    {\tt Katsura-11-h} & {\color{4}59}.{\color{4}004} & 
{\color{5}59}.{\color{5}871} & {\color{3}58}.{\color{3}689} & 
{\color{6}61}.{\color{6}496} & {\color{1}58}.{\color{1}411} & 
{\color{2}58}.{\color{2}673}\\ \hline
    {\tt Katsura-11} & {\color{5}53}.{\color{5}894} & 
{\color{4}53}.{\color{4}855} & {\color{2}53}.{\color{2}464} & 
{\color{6}56}.{\color{6}122} & {\color{3}53}.{\color{3}984} & 
{\color{1}53}.{\color{1}118}\\ \hline
    {\tt Gonnet-83-h} & {\color{4}12}.{\color{4}165} & 
{\color{3}10}.{\color{3}617} & {\color{6}126}.{\color{6}173} & 
{\color{5}25}.{\color{5}963} & {\color{2}9}.{\color{2}811} & 
{\color{1}8}.{\color{1}761}\\ \hline
    {\tt Schrans-Troost-h} & {\color{6}4}.{\color{6}393} & 
{\color{5}4}.{\color{5}250} & {\color{1}2}.{\color{1}970} & 
{\color{4}3}.{\color{4}498} & {\color{2}3}.{\color{2}087} & 
{\color{1}2}.{\color{1}970}\\ \hline
  \end{tabular}
	\par
\end{centering}

\caption{Time needed to compute a Gr\"obner basis of the respective test case, given in seconds.}
\label{tab:ch5:time}
\end{table}
\end{center}

\begin{table}
\begin{centering}
  	\begin{tabular}{|c|c|c|c|c|}
		\hline
  		Test case & $\ggv$ & $\ggvc$   & $\ffc$,$\ffcc$ & $\ffe$,$\ffec$\\   
		\hline
		\hline
		{\tt Cyclic-7-h} & {\color{6}1,750,989} & {\color{5}625,815} & {\color{3}100,569} & {\color{1}83,880}\\ 
		\hline
		{\tt Cyclic-7} & {\color{6}1,750,989} & {\color{5}625,815} & {\color{3}100,569} & {\color{1}83,880}\\ 
		\hline
		{\tt Cyclic-8-h} & {\color{6}113,833,183} & {\color{5}44,663,466} & {\color{3}14,823,873} & {\color{1}3,403,874}\\ 
		\hline
		{\tt Cyclic-8} & {\color{6}113,833,183} & {\color{5}44,663,466} & {\color{3}14,823,873} & {\color{1}3,403,874}\\ 
		\hline
		{\tt Eco-9-h} & {\color{4}409,880} & {\color{3}238,841} &
{\color{5}1,996,849} & {\color{1}136,842}\\ 
		\hline
		{\tt Eco-9} & {\color{6}551,837} & {\color{5}310,745} & {\color{1}247,434} & {\color{1}247,434}\\ 
		\hline
		{\tt Eco-10-h} & {\color{4}3,760,244} & {\color{3}1,996,573} & {\color{5}19,755,560} & {\color{1}1,019,439}\\ 
		\hline
		{\tt Eco-10} & {\color{6}6,853,713} & {\color{5}3,352,474} & {\color{1}2,384,889} & {\color{1}2,384,889}\\ 
		\hline
		{\tt Eco-11-h} & {\color{4}33,562,613} & {\color{3}16,695,766} & {\color{5}-} & {\color{1}7,374,779}\\ 
		\hline
		{\tt F-744-h} & {\color{6}1,082,448} & {\color{3}693,630} & {\color{4}789,072} & {\color{1}435,869}\\ 
		\hline
		{\tt F-744} & {\color{6}473,838} & {\color{5}285,402} & {\color{1}179,100} & {\color{1}179,100}\\ 
		\hline
		{\tt F-855-h} & {\color{6}23,097,574} & {\color{3}4,407,938} & {\color{4}12,294,951} & {\color{1}2,633,666}\\ 
		\hline
		{\tt F-855} & {\color{6}7,976,163} & {\color{5}1,772,726} & {\color{1}835,718} & {\color{1}835,718}\\ 
		\hline
    {\tt Katsura-10-h} & {\color{5}18,955} & {\color{5}18,955} & 
{\color{1}18,343} & {\color{1}18,343}\\ \hline
		{\tt Katsura-10} & {\color{5}18,955} & {\color{5}18,955} & {\color{1}18,343} & {\color{1}18,343}\\ 
		\hline
		{\tt Katsura-11-h} & {\color{5}65,991} & {\color{5}65,991} & {\color{1}63,194} & {\color{1}63,194}\\ 
		\hline
		{\tt Katsura-11} & {\color{5}65,991} & {\color{5}65,991} & {\color{1}63,194} & {\color{1}63,194}\\ 
		\hline
    {\tt Gonnet-83-h} & {\color{4}113,609} & {\color{1}93,137} & 
{\color{5}278,419} & {\color{1}93,137}\\ \hline
		{\tt Schrans-Troost-h} & {\color{6}19,132} & {\color{5}18,352} & {\color{1}14,010} & {\color{1}14,010}\\ 
		\hline
	\end{tabular}
	\par
\end{centering}

\caption{Number of all reduction steps throughout the computations of the algorithms.}
\label{tab:ch5:totalRed}
\end{table}
\begin{table}
\begin{centering}
  	\begin{tabular}{|c|c|c|c|}
		\hline
  		Test case & $\ggv$,$\ggvc$   & $\ffc$,$\ffcc$ & $\ffe$,$\ffec$\\   
		\hline
		\hline
		{\tt Cyclic-7-h} & {\color{1}36} & {\color{5}76} & {\color{1}36}\\ 
		\hline
		{\tt Cyclic-7} & {\color{1}36} & {\color{5}76} & {\color{1}36}\\ 
		\hline
		{\tt Cyclic-8-h} & {\color{1}244} & {\color{5}1,540} & {\color{1}244}\\ 
		\hline
		{\tt Cyclic-8} & {\color{1}244} & {\color{5}1,540} & {\color{1}244}\\ 
		\hline
		{\tt Eco-9-h} & {\color{1}120} & {\color{5}929} & {\color{1}120}\\ 
		\hline
		{\tt Eco-9} & {\color{1}0} & {\color{1}0} & {\color{1}0}\\ 
		\hline
		{\tt Eco-10-h} & {\color{1}247} & {\color{5}2,544} & {\color{1}247}\\ 
		\hline
		{\tt Eco-10} & {\color{1}0} & {\color{1}0} & {\color{1}0}\\ 
		\hline
		{\tt Eco-11-h} & {\color{1}502} & {\color{5}-} & {\color{1}502}\\ 
		\hline
		{\tt F-744-h} & {\color{1}323} & {\color{5}498} & {\color{1}323}\\ 
		\hline
		{\tt F-744} & {\color{1}0} & {\color{1}0} & {\color{1}0}\\ 
		\hline
		{\tt F-855-h} & {\color{1}835} & {\color{5}2,829} & {\color{1}835}\\ 
		\hline
		{\tt F-855} & {\color{1}0} & {\color{1}0} & {\color{1}0}\\ 
		\hline
    {\tt Katsura-10-h} & {\color{1}0} & {\color{1}0} & {\color{1}0}\\ \hline
		{\tt Katsura-10} & {\color{1}0} & {\color{1}0} & {\color{1}0}\\ 
		\hline
		{\tt Katsura-11-h} & {\color{1}0} & {\color{1}0} & {\color{1}0}\\ 
		\hline
		{\tt Katsura-11} & {\color{1}0} & {\color{1}0} & {\color{1}0}\\ 
		\hline
    {\tt Gonnet-83-h} & {\color{1}2,005} & {\color{5}8,129} & {\color{1}2,005}\\ 
    \hline
		{\tt Schrans-Troost-h} & {\color{1}0} & {\color{1}0} & {\color{1}0}\\ 
		\hline
	\end{tabular}
	\par
\end{centering}
\caption{Number of zero reductions computed by the algorithms.}
\label{tab:ch5:zero}
\end{table}

\section{Conclusion}
\label{sec:conclusion}
This paper contributes a more efficient usage, generalization, and combination
of, for specific incremental signature-based algorithms, known improvements. Even in situations where
it does not enlarge the number of detected useless critical pairs ($\ffc$,
$\ffe$) it gives quite impressive speed-ups using a faster,
less complex way of detecting useless data. 

Looking explicitly at $\ggv$, the improvement in terms of removing redundant 
critical pairs is astonishing. Due to the fact that $\ggv$ lacks a real 
implementation of (RW) the idea presented in Section~\ref{sec:idea} gives an 
easy way to add, at least partly, the strengths of $\ff$'s (RW) implementation 
to $\ggv$ without making the algorithm's description more complex. 

We have presented a rather neat improvement that can
be added to any existing implementation of an incremental signature-based 
algorithm without any bigger effort. Besides
having a huge impact on the computations for non-regular sequences, it does not 
slow down the overall efficiency of incremental signature-based Gr\"obner basis 
algorithms for regular input at all.
\section*{Acknowledgements}
I would like to thank the referees for improving this paper with their
suggestions, especially the one pointing out an error in a previous version of 
this paper, a fact Example~\ref{ex:problem} mirrors.
Furthermore, I would like to thank John Perry for helpful discussions and comments.
\bibliographystyle{plain}

\end{document}